\newcommand{\CVGM}{\mathrm{CV}_\mathrm{GM}}
\newcommand{\CVEM}{\mathrm{CV}_\mathrm{EM}}
\newcommand{\CVGMG}{\CVEM}
\newcommand{\leaf}{\mathrm{leaf}}
\newcommand{\internal}{\mathrm{internal}}
\newcommand{\vertex}{\mathrm{vertex}}
\newcommand{\rep}{\mathrm{rep}}
\begin{document}

\title{On the ideals of equivariant tree models}

\author[J.~Draisma]{Jan Draisma}
\address[Jan Draisma]{
Department of Mathematics and Computer Science\\
Technische Universiteit Eindhoven\\
P.O. Box 513, 5600 MB Eindhoven, Netherlands}
\thanks{The first author is supported by DIAMANT, an NWO
mathematics cluster.}
\email{j.draisma@tue.nl}

\author[J.~Kuttler]{Jochen Kuttler}
\address[Jochen Kuttler]{
Department of Mathematical and Statistical Sciences\\
632 Central Academic Building\\
University of Alberta\\
Edmonton, Alberta T6G 2G1,
CANADA}
\thanks{The second author is supported by an NSERC Discovery Grant.}
\email{jochen.kuttler@ualberta.ca}

\begin{abstract}
We introduce equivariant tree models in algebraic statistics, which
unify and generalise existing tree models such as the general Markov
model, the strand symmetric model, and group-based models such as the
Jukes-Cantor and Kimura models. We focus on the ideals of such models. We
show how the ideals for general trees can be determined from the ideals
for stars. A corollary of theoretical importance is that the ideal for a
general tree is generated by the ideals of its flattenings at vertices.
The main novelty is that our results yield generators of the full ideal
rather than an ideal which only defines the model set-theoretically.
\end{abstract}

\maketitle

\section{Set-up and theorems} \label{sec:Setup}

In phylogenetics, tree models have been introduced to describe the
evolution of a number of species from a distant common ancestor.
Given suitably aligned strings of nucleotides of $n$ species alive today,
one assumes that the individual positions in these strings have evolved
independently and according to the same statistical process. Counting and
averaging thus yields an empirical probability distribution on the set
$\{A,C,G,T\}^n$. On the other hand, any hypothetical evolutionary tree
whose leaves correspond to the $n$ species gives rise to a parameterised
family of probability distributions on $\{A,C,G,T\}^n$; see Section
\ref{sec:Statistics} for details. Here the parameters consist of an
initial distribution and transition matrices along the edges of the
tree. The challenge is to test whether the tree fits the data, that is,
whether the empirical distribution lies in the family. One way to do
this is the use of {\em phylogenetic identities}, equations that vanish
identically on the family. This paper is concerned with constructing
such identities for general trees from identities for smaller trees. The
set-up below unifies and generalises existing tree models in algebraic
statistics, while allowing for a clean and elegant treatment with methods
from classical invariant theory.  For more information on algebraic
statistics and its applications see \cite{Eriksson05, Pachter05} and
the many references there.

\begin{re}
The term {\em phylogenetic invariants} is commonly used for phylogenetic
identities. To avoid confusion with the invariants in
classical invariant theory, we do not use this term.
\end{re}

First, recall that a tree $T$ is a connected, undirected graph without
circuits; all our trees are finite and have at least two vertices. The
{\em valency} of a vertex of $T$ is the number of edges containing it. A
vertex of $T$ is called a leaf if it has valency $1$, and an internal
vertex otherwise; if $p,q$ are vertices, we write $p \sim q$ if there is
an edge connecting them. We write $\vertex(T),\leaf(T),\internal(T)$
for the sets of vertices, leaves, and internal vertices of $T$,
respectively. Stars are trees of diameter at most $2$, and a centre
of a star is a vertex at distance $1$ to all other vertices---so if
the star has more than $2$ vertices, then its centre is unique. A {\em
subtree} of a tree is a connected induced subgraph, and a {\em substar}
is a subtree which itself is a star. So the map that sends a vertex $p$
of $T$ to the induced subgraph on $p$ together with its neighbours is
a bijection between $\vertex(T)$ and the set of substars of $T$, except
when $T$ consists of a single edge. In all that follows, we work over a
ground field $K$ that is algebraically closed and of characteristic zero.

\begin{de}
A {\em spaced tree} $T$ is given by the following data: First, a finite
undirected tree, also denoted $T$; second, for every $p\in \vertex(T)$ a
finite-dimensional vector space $V_p$; third, a non-degenerate symmetric
bilinear form $(. \mid .)_p$ on each $V_p$; and fourth, for every $p \in
\internal(T)$ a distinguished basis $B_p$ of $V_p$ which is orthonormal
with respect to $(. \mid .)_p$.

The space $V_p$ at a leaf $p$ may also be given a distinguished basis
$B_p$, orthonormal with respect to $(. \mid .)_p$, in which case $p$
is called a {\em based leaf}. An internal vertex of $T$ will also be
called based. Any subtree of the underlying tree of $T$ is regarded as
a spaced tree with the data that it inherits from $T$.
\end{de}

Note that there is some redundancy in this definition: given the
distinguished basis $B_p$ at a based vertex $p$ one could {\em define}
$(. \mid .)_p$ by the requirement that $B_p$ be orthonormal. We will
leave out the subscript $p$ from the bilinear form when it is obvious
from the context. In many applications in algebraic statistics, symmetry
is imposed on the algebraic model. This notion is captured well by
the following notion of a $G$-spaced tree. Fix, once and for all,
a finite group $G$.

\begin{de}
A {\em $G$-spaced tree} (or $G$-tree, for short) is a spaced tree
$T$ in which the space $V_p$ at every vertex $p$ is a $G$-module, on
which $(. \mid .)_p$ is $G$-invariant, and in which $B_p$ is $G$-stable
whenever $p$ is a based vertex. Any subtree of the underlying tree of $T$
is regarded as a $G$-spaced tree with the data that it inherits from $T$.
\end{de}

The objects that parameterise probability distributions in the algebraic
model are what we propose to call representations of spaced trees.

\begin{de}
Let $T$ be a spaced tree. A {\em representation} of $T$ is a collection of
tensors $(A_{qp})_{q \sim p} \in V_q \otimes V_p$ along the edges of $T$
with $A_{pq}=A_{qp}^t$, where $\cdot^t$ is the natural isomorphism $V_p
\otimes V_q \rightarrow V_q \otimes V_p$. The space of all representations
of $T$ is denoted $\rep(T)$. A representation of $T$ gives rise to a
representation of any subtree of $T$.

For a $G$-spaced tree $T$, a {\em $G$-representation} or {\em equivariant
representation} of $T$ is a representation $(A_{qp})_{p \sim q}$ where
each $A_{qp}$ is a $G$-invariant element in the $G$-module $V_q \otimes
V_p$. The space of such representations is denoted $\rep_G(T)$.
A $G$-representation of $T$ gives rise to a $G$-representation of any
subtree of $T$.
\end{de}

Using the bilinear form $(.|.)_p$ we may identify $V_p$ with its dual
$V_p^*$, and hence $V_q \otimes V_p$ with $V_q \otimes V_p^* \cong
\Hom(V_p,V_q)$. Thus viewing $A_{qp}$ as a linear map $V_p \rightarrow
V_q$ and, similarly, $A_{pq}$ as a linear map $V_q \rightarrow V_p$, the
condition $A_{pq}=A_{qp}^t$ translates into $(A_{qp} v \mid w)_q=(v \mid
A_{pq} w)_p$ for all $v \in V_p$ and $w \in V_q$. Put yet differently,
if $p$ and $q$ are both based, then this says that the matrix of
$A_{pq}$ relative to the bases $B_p$ and $B_q$ is the transpose of
the matrix of $A_{qp}$. In the applications to statistics, the spaces
$\Hom(V_p,V_q)$ or the space of $|B_q| \times |B_p|$-matrices are perhaps
more natural to work with than $V_q \otimes V_p$, as the elements of
a representation correspond to {\em transition matrices}; see Section
\ref{sec:Statistics}. However, there are good reasons to work with
$V_q \otimes V_p$; for instance, the correct action of $\lieg{GL}(V_p)
\times \lieg{GL}(V_q)$ on the edge parameters turns out to be the natural
action on $V_q \otimes V_p$ rather than that on $\Hom(V_q,V_p)$; see
Lemma~\ref{lm:GTAction} and the proof of Proposition \ref{prop:Dense1}.
Finally we note that if $T$ is a $G$-tree, then by the invariance of
the bilinear form $(.|.)_p$ the identifications above still make sense when
passing to $G$-invariant elements: $(V_q \otimes V_p)^G=\Hom_G(V_p,V_q)$,
etc.

Notice the slight discrepancy between our notion of representations and
the notion in quiver representation theory, where---apart from the fact
that the underlying graph is directed---the spaces $V_p$ form part of
the data comprising a representation.

A {\em $T$-tensor} is any element of $\bigotimes_{p \in \leaf(T)} V_p$,
which space we will denote by $L(T)$ throughout the text.  $T$-tensors
correspond to marginal probability distributions in statistics; see
Section \ref{sec:Statistics}. An important operation on spaced trees,
representations, and $T$-tensors is $*$, defined as follows. Given $k$
spaced trees $T_1,\ldots,T_k$ whose vertex sets share a common based
leaf $q$ with common space $V_q$ and common basis $B_q$ but which
trees are otherwise disjoint, we construct a new spaced tree $*_i T_i$
obtained by gluing the $T_i$ along $q$, while the space at a vertex $p$
of $*_i T_i$ coming from $T_i$ is just the space attached to it in $T_i$,
with the same distinguished bilinear form, and the same basis if $p$
is based. Given representations $A_i \in \rep(T_i)$ for $i=1,\ldots,k$,
we write $*_i A_i$ for the representation of $*_i T_i$ built
up from the $A_i$.  Now let $\Psi_i$ be a $T_i$-tensor, for all $i$.
Then we obtain a $T$-tensor by tensoring as follows:
\[ *_i \Psi_i:=\sum_{b \in B_q} \otimes_i (b \mid \Psi_i), \]
where we abuse the notation $(. \mid .)$ for the natural contraction
\[ V_q \times \bigotimes_{p \in \leaf(T_i)} V_p \to
	\bigotimes_{p \in \leaf(T_i) \setminus \{q\}} V_p \]
determined by the bilinear form $(. \mid .)_q$. Notice that this $*$
operator is {\em not} a binary operator extended to several factors;
nevertheless, when convenient, we will write $T_1 * \cdots * T_k$ for
$*_i T_i$ and $\Psi_1 * \cdots * \Psi_i$ for $*_i \Psi_i$.

Now we come to a fundamental procedure that associates to any
representation of a spaced tree $T$ a $T$-tensor. Let $A \in \rep(T)$.
We proceed inductively. First, if $T$ has a single edge $\{p,q\}$, then
$\Psi_T(A):=A_{qp}$, regarded as an element of $L(T)=V_q \otimes V_p$.
If $T$ has more than one edge, then let $q$ be any internal vertex of
$T$. We can then write $T=*_{p \sim q} T_p$, where $T_p$ is the
{\em branch of $T$ around $q$ containing $p$}, constructed by taking the
connected component of $T - q$ (the graph obtained from $T$ by removing $q$ and all edges attached to $q$) containing $p$, and reattaching $q$ to $p$.

The representation $A$ induces representations $A_p$ of the
$T_p$, and by induction $\Psi_{T_p}(A_p)$ has been defined. We now set
\[ \Psi_T(A):=*_{p \sim q} \Psi_{T_p}(A_p). \]
A straightforward proof by induction shows that this is independent of
the choice of $q$ and that this formula is also valid if $q$ is actually
a leaf. Now we can define the key objects of this paper.

\begin{de}
Let $T$ be a spaced tree. The {\em general Markov model} associated to
$T$ is the algebraic variety
\[ \CVGM(T):=\overline{\{\Psi_T(A) \mid A \in \rep(T) \}}
\subseteq L(T), \]
where the closure is taking in the Zariski topology.

Similarly, for a $G$-spaced tree $T$, the {\em equivariant
model} associated to $T$ is the algebraic variety
\[ \CVEM(T):=\overline{\{\Psi_T(A) \mid A \in \rep_G(T) \}} \subseteq
L(T). \]
\end{de}

Notice that {\em a priori} both the individual tensors $\Psi_T(A)$ and the
varieties $\CVGM(T)$, $\CVEM(T)$ depend on the bases $B_q$ at internal
vertices $q$. This is only natural, as in applications these bases have
an intrinsic meaning; see Section \ref{sec:Statistics}.  However, more
can be said about this dependency; see Lemma \ref{lm:GTAction}.

To streamline our discussion, we will consider $\CVGM$ as the special case
of $\CVEM$ where $G$ is trivial. An important goal in algebraic statistics
is finding the ideal of all polynomials on the space $L(T)$ that vanish
on $\CVEM(T)$. Our first result is a procedure for constructing these
ideals from the ideals for substars of $T$.

\begin{thm} \label{thm:Procedure}
For any $G$-spaced tree $T$, the ideal $I(\CVEM(T))$ can be expressed
in the ideals $I(\CVEM(S))$ where $S$ runs over the $G$-spaced substars
of $T$ with at least three leaves. In particular, for any spaced tree,
the ideal of $I(\CVGM(T))$ can be expressed in the ideals $I(\CVGM(S))$
where $S$ runs over the spaced substars of $T$ with at least three leaves.
\end{thm}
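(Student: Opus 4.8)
The plan is to reduce the statement about a general tree $T$ to a statement about a tree with one fewer internal edge, and iterate. The key structural observation is the decomposition $T = *_{p \sim q} T_p$ around an internal vertex $q$: if $e = \{q, q'\}$ is an internal edge of $T$ (so both $q$ and $q'$ are internal vertices), then cutting $T$ at $e$ expresses $T$ as a gluing $T = T' *_{q} T''$ of two smaller spaced trees along a based leaf coming from the vertex $q$ (and symmetrically from $q'$). Concretely, $L(T) = L(T') \otimes L(T'')$ after the appropriate contraction, and $\Psi_T(A) = \Psi_{T'}(A') * \Psi_{T''}(A'')$. So the first step is to prove a \emph{gluing lemma}: the ideal $I(\CVEM(T))$ can be expressed in terms of $I(\CVEM(T'))$ and $I(\CVEM(T''))$, \emph{together with} the ideal of the ``small'' model describing how the two tensors are contracted along $V_q$. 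I expect the $*$-operation to be describable as a composition of: (i) the Segre-type product $L(T') \otimes L(T'') \hookleftarrow$ (the cone over) a product of the two models, and (ii) a contraction/partial-trace map along the common factor $V_q$. Since these are explicit polynomial (indeed multilinear) maps, pulling back ideals through them is manageable, the main point being that images of varieties under such maps have ideals computable by elimination from the ideals of the source plus the graph of the map.

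Second, I would set up the induction on the number of internal vertices (equivalently internal edges). The base case is a tree with exactly one internal vertex, i.e.\ a star, for which there is nothing to prove since $T$ is its own substar (one must only check the ``at least three leaves'' proviso: a star with two leaves is a single edge or a path of length two, and in the latter case $\CVEM$ of a path $p - q - r$ is already cut out by the ideals of its edges via the rank/contraction conditions, or can be absorbed into the neighboring stars — this degenerate bookkeeping should be handled up front). For the inductive step, pick an internal edge $e$, write $T = T' *_q T''$ with $T', T''$ both having strictly fewer internal vertices; apply the gluing lemma to express $I(\CVEM(T))$ in terms of $I(\CVEM(T'))$, $I(\CVEM(T''))$, and the small contraction ideal; then apply the inductive hypothesis to $T'$ and $T''$ to rewrite \emph{their} ideals in terms of ideals of \emph{their} substars. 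The crucial compatibility fact is that every substar of $T'$ or of $T''$ is a substar of $T$ (substars are determined by a single vertex and its neighbors, and this data is inherited under the decomposition), so the final expression involves only ideals $I(\CVEM(S))$ for $S$ a $G$-spaced substar of $T$. One also checks that the small contraction ideal along the edge $e$ is subsumed — it corresponds to the trivial combinatorics of a single edge and contributes nothing new, or is itself expressible via the substars at the two endpoints of $e$.

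The phrase ``can be expressed in'' must be pinned down precisely — presumably it means: $I(\CVEM(T))$ is obtained from the listed ideals (pulled back along the evident coordinate projections / flattening maps $L(T) \to L(S)$) by a finite sequence of the operations sum of ideals, intersection, saturation, and ideal quotient, or perhaps more strongly by the single operation ``sum of pullbacks after a suitable saturation.'' I would fix this definition early, because the strength of the gluing lemma depends on it: proving a clean \emph{set-theoretic} gluing statement is routine (a tensor lies in $\CVEM(T)$ iff its flattening at the cut edge lies in the image of the contraction of $\CVEM(T') \times \CVEM(T'')$), but upgrading to the full ideal — which is the stated novelty of the paper — is the real obstacle. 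The danger is that the naive sum of pulled-back ideals only defines the model set-theoretically, and one needs the finer structure of the $*$-operation (in particular that contracting along an orthonormal basis $B_q$ at an internal vertex is, up to the $\lieg{GL}(V_q)$-action alluded to in Lemma \ref{lm:GTAction}, a ``nice'' quotient) to get the scheme-theoretic statement. So I expect the heart of the argument, and the place where classical invariant theory enters, to be a lemma asserting that the ideal of the image of $\CVEM(T') \times \CVEM(T'')$ under the contraction-along-$V_q$ map is computed \emph{without} extra saturation from the ideals of the factors plus the ideal of the generic contraction map at a single internal vertex — i.e.\ that the relevant morphism is flat, or that the relevant multiplication of ideals is ``exact.'' Establishing this, presumably via a $\lieg{GL}(V_q)$-equivariant argument reducing to the star case, is the main technical hurdle; once it is in hand, the induction above closes routinely.
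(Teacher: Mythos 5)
Your overall strategy is aligned with the paper's: split $T$ at a distinguished vertex, prove a ``gluing lemma'' using $\lieg{GL}(V_q)$-equivariance and classical invariant theory (the paper does this via the First Fundamental Theorem and the Reynolds operator, in its Theorem~\ref{thm:VW}), and induct. You also correctly identify the scheme-theoretic upgrade from a set-theoretic gluing as the real obstacle, and you correctly guess that reductivity/$\GL$-equivariance is what makes the sum of the two pulled-back ideals exact with no need for saturation. That diagnosis is on the mark.

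However, there is a genuine gap in your inductive bookkeeping, and it is exactly where you are most confident. You propose cutting $T$ at an internal \emph{edge} $e=\{q,q'\}$, writing $T=T'*_q T''$ with (say) $T'$ the branch of $T$ around $q$ containing $q'$ and $T''$ the union of the remaining branches, and you then assert that ``every substar of $T'$ or of $T''$ is a substar of $T$.'' That assertion is false at the cut vertex: in $T''$, the vertex $q$ is missing the branch towards $q'$, so the substar of $T''$ around $q$ has one fewer leaf than the substar of $T$ around $q$. This smaller star is not among the substars of $T$, its model is not obviously computable from theirs, and the induction does not close. The paper avoids this entirely by only ever splitting at a vertex of valency two, where the substar at the cut vertex is a path with two leaves and hence contributes no nontrivial ideal; when $T$ has no valency-two vertex and is not a star, it first \emph{inserts two} new vertices $q_1,q_2$ along an internal edge $p\sim r$, with $V_{q_1}:=V_r$ and $V_{q_2}:=V_p$ (so that $\CVEM$ is unchanged), and only then splits. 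The reason for inserting two vertices rather than one is precisely to keep the substars at $p$ and $r$ unchanged; the two new substars at $q_1,q_2$ have only two leaves and are discarded. (Remark~\ref{re:ValencyTwoC} sketches the one-vertex variant you may have in mind, but it requires a further patch, Lemma~\ref{le:PushFwd}, to compare the altered substars to the originals.) You would need to replace your edge-cutting step by this valency-two device, or add the equivalent patch, for the induction to go through. You should also state and prove the gluing lemma itself---the claim you relegate to ``I expect''---since it is where the FFT/Reynolds argument actually lives and it is the nontrivial content that turns the set-theoretic statement into an ideal-theoretic one.
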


This theorem is admittedly formulated somewhat vaguely. However, its
proof in Section \ref{sec:Proofs} gives rise to the explicit, recursive
Algorithm \ref{alg:Procedure} for determining $I(\CVEM(T))$ from the
ideals $I(\CVEM(S))$; this justifies the present formulation.

We now present a variant of Theorem \ref{thm:Procedure} which is
perhaps less useful for actual computations, but which is of fundamental
theoretical interest. This variant uses a second important operation on
spaced trees and leaf tensors, namely, {\em flattening}. Fix any vertex
$q$ in a spaced tree $T$, and define an equivalence relation on $\leaf(T)
\cup \{q\}$ by $p \cong r$ if and only if either $p=q=r$ or $p,r \neq q$
lie in the same connected component of $T-q$. Construct a spaced star
$\flat_q T$ as follows: First, the vertex set is the set $\leaf(T) \cup
\{q\}/\cong$ of equivalence classes, and the class of $q$ is attached
to all other classes by an edge.  To the class $C$ we attach the space
$V_C:=\bigotimes_{p \in C} V_p$ equipped with the bilinear form inherited
from the $V_p$, and if all $p \in C$ are based, then $C$ is based with
the tensor product of the bases $B_p$. This new spaced tree $\flat_q T$
is called the {\em flattening} of $T$ at $q$. Note that we allow $q$ to
be a leaf of $T$, in which case $\flat_q T$ has a single edge. The space
$L(T)$ of $T$-tensors is naturally identified with the space $L(\flat_q
T)$ of $\flat_q T$-tensors, and expanding the definition of $\Psi_T$
at $q$ one readily finds that
\[ \CVEM(T) \subseteq \CVEM(\flat_q T) \text{ for all } q. \]
Our second main result shows that this characterises $\CVEM(T)$.

\begin{thm} \label{thm:Flattening}
For any $G$-spaced tree $T$ we have
\[ I(\CVEM(T))=\sum_{q \in \vertex(T)} I(\CVEM(\flat_q T)).
\]
\end{thm}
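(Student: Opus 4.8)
The plan is to deduce Theorem~\ref{thm:Flattening} from Theorem~\ref{thm:Procedure}, using the fact that the flattening operation is precisely the tool that reduces an arbitrary substar computation to the two-level structure underlying the recursive Algorithm~\ref{alg:Procedure}. One inclusion is immediate: since $\CVEM(T)\subseteq\CVEM(\flat_q T)$ for every vertex $q$, every polynomial vanishing on some $\CVEM(\flat_q T)$ vanishes on $\CVEM(T)$, so $\sum_q I(\CVEM(\flat_q T))\subseteq I(\CVEM(T))$. The content is the reverse inclusion, and here I would run the recursion of Theorem~\ref{thm:Procedure} and check that at each step the ideal contributions that appear are already contained in $\sum_q I(\CVEM(\flat_q T))$.

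More concretely, I would argue by induction on the number of edges of $T$. The base case is a star, where $\flat_q T$ at the centre $q$ is $T$ itself (or, for a single edge, trivially so), and the statement is tautological. For the inductive step, pick an internal vertex $q$ and write $T=*_{p\sim q}T_p$ as in the definition of $\Psi_T$. The key structural input from the proof of Theorem~\ref{thm:Procedure} should be a formula expressing $I(\CVEM(T))$ in terms of (i) the ideal $I(\CVEM(S_q))$ of the substar $S_q$ around $q$, pulled back along the appropriate contraction/multiplication map, and (ii) the ideals $I(\CVEM(T_p))$ of the strictly smaller branches, pulled back along the natural maps on leaf-tensor spaces. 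Now $I(\CVEM(S_q))$ is visibly a summand of $I(\CVEM(\flat_q T))$ — indeed $\flat_q T$ \emph{is} a star with the same combinatorial shape as $S_q$, obtained by bundling the leaves in each branch, and its ideal contains the pullback of $I(\CVEM(S_q))$ under the corresponding tensor-product-of-identifications map. For the branch contributions, the inductive hypothesis gives $I(\CVEM(T_p))=\sum_{r\in\vertex(T_p)}I(\CVEM(\flat_r T_p))$; and for each vertex $r$ of $T_p$ one checks that $\flat_r T_p$, after re-grouping, sits inside $\flat_r T$ compatibly, so that the pullback of $I(\CVEM(\flat_r T_p))$ lands in $I(\CVEM(\flat_r T))\subseteq\sum_{q'}I(\CVEM(\flat_{q'}T))$. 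Summing, we get $I(\CVEM(T))\subseteq\sum_{q'\in\vertex(T)}I(\CVEM(\flat_{q'}T))$.

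The main obstacle, and the step I would spend the most care on, is making precise the compatibility between flattening a branch $T_p$ at a vertex $r$ and flattening the whole tree $T$ at $r$. The subtlety is bookkeeping: in $\flat_r T_p$ the leaves of $T$ not in $T_p$ have been collapsed to the single glued leaf $q$, whereas in $\flat_r T$ they are grouped according to the components of $T-r$, and one of those components contains $q$ together with all of $V\setminus V(T_p)$. One must verify that the natural identification $L(T)\cong L(\flat_r T)$ restricts correctly, that the bilinear forms and (where present) the $G$-stable bases on the bundled spaces match under tensor products, and that the resulting map sends $\CVEM(\flat_r T_p)$-type constraints to genuine elements of $I(\CVEM(\flat_r T))$ — this last point relies on the observation, already used implicitly in the $\CVEM(T)\subseteq\CVEM(\flat_q T)$ remark, that the equivariant model of a flattening is insensitive to further bundling of its leaves in a way that refines a given partition. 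I would isolate this as a small lemma about iterated flattenings, $\flat_r(\flat_q T)=\flat_r T$ in the appropriate sense when the partitions are compatible, and then the induction closes cleanly.
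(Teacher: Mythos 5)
Your high-level strategy --- the easy inclusion, induction on the tree, splitting at a vertex, and iterated-flattening compatibility as the crux --- mirrors the paper's. But the specific decomposition you conjecture is not the one the paper supplies and has a directional error that matters. The paper does not split at a general internal vertex of arbitrary valency; it reduces to the case where the split vertex $q$ has valency exactly two (inserting two auxiliary vertices into an internal edge when no such $q$ exists, and checking both sides of the claimed equality are unchanged). The reason is structural: Theorem~\ref{thm:VW} is about gluing exactly two branches by matrix multiplication, so a two-branch decomposition $T=T_1*T_2$ is what it handles. Then Corollary~\ref{cor:VM} splits $I(\CVEM(T_1)*\CVEM(\flat_q T_2))$ into (i) rank conditions (minors), which are equations for $\CVEM(\flat_q T)$ because $\flat_q T$ at a valency-two vertex is a single-edge star whose model is cut out precisely by those minors, and (ii) pullbacks of $I(\CVEM(T_1))$ along $\Psi\mapsto\Psi\Psi_0$. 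The substar ideal $I(\CVEM(S_q))$ never enters. Moreover, your claim that ``$I(\CVEM(\flat_q T))$ contains the pullback of $I(\CVEM(S_q))$'' has the direction backwards: $I(\CVEM(S_q))$ lives in $K[L(S_q)]$, a different polynomial ring from $K[L(T)]$, and the natural comparison (Lemma~\ref{le:PushFwd}) is $I(\CVEM(S_q))=\tau^\sharp I(\CVEM(\flat_q T))$ for an injection $\tau\colon L(S_q)\hookrightarrow L(T)$ --- so $I(\CVEM(\flat_q T))$ determines $I(\CVEM(S_q))$, not the reverse, and the thing you actually need in the decomposition is $I(\CVEM(\flat_q T))$ itself.

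You correctly flag iterated-flattening compatibility as the main step and propose a lemma to that effect, but leave it unproved. The paper is terse here too (it simply asserts that if $f$ is an equation for $\CVEM(\flat_r T_1)$, then $\Psi\mapsto f(\Psi\Psi_0)$ vanishes on $\CVEM(\flat_r T)$), so isolating this as a lemma is a reasonable instinct; as written, though, your proposal contains the right skeleton but neither the correct splitting mechanism (valency-two reduction plus Corollary~\ref{cor:VM}) nor a verified compatibility statement, and the role you assign to $S_q$ would need to be replaced by $\flat_q T$ throughout.
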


\begin{re}
\begin{enumerate}
\item If $T$ has more than one edge, then it suffices to let $q$ run over
$\internal(T)$.
\item To avoid confusion we stress that $\flat_q T$ is {\em not} a
substar of $T$, unless $T$ itself is a star with centre $q$, in which
case $\flat_q T \cong T$.
\end{enumerate}
\end{re}

Many special cases of our main results are known in the literature. In
particular \cite{Allman04} contains set-theoretic versions of our theorems
for the general Markov model, and poses Theorem \ref{thm:Flattening}
for the general Markov model as Conjecture 5. In
\cite{Sturmfels05b}
the ideals of equivariant models with $G$ abelian and all $V_p$ equal to
the regular representation $KG$ are determined, following ideas from \cite{Evans93}. An important
observation that makes this feasible is that these varieties are toric;
see also Section \ref{sec:Toric}.
Some more specific references to the literature may be found in the Section~\ref{sec:Statistics}, which explains the relevance of spaced trees and their representations to statistics.
After that, in Section \ref{sec:Multiplying} we prove a
key tool on multiplying varieties of matrices, which we then use in
Section \ref{sec:Proofs} to prove our main results. Finally, Section
\ref{sec:Toric} contains a result on toricness of certain abelian
equivariant models.

\section{Acknowledgments}
The first author thanks Seth Sullivant for his great EIDMA/DIAMANT course
on algebraic statistics in Eindhoven. It was Seth who pointed out that a
result like the one in Section \ref{sec:Multiplying} could be used to
treat various existing tree models in a unified manner.

\section{Relevance to statistics} \label{sec:Statistics}

In the applications of our results to algebraic statistics, the spaced
tree $T$ that we start with only has based vertices. Indeed, the
bases $B_p$ have some physical meaning. In phylogenetics, for instance,
they are usually all equal to $\{A,C,G,T\}$, the building bricks for
DNA. Furthermore, an internal vertex $r$ is singled out as root, and
the base field is $K:=\CC \supseteq \RR$.  An element of $V_p$ which on
the basis $B_p$ has non-negative real coefficients that add up to $1$ is
regarded as a probability distribution on $B_p$; together they form the
{\em probability simplex} $\Delta(V_p) \subseteq V_p$. A representation
of $T$ is called {\em stochastic} if all maps $A_{qp}:V_p \rightarrow
V_q$ directed away from $r$ satisfy $A_{qp} \Delta(V_p) \subseteq
\Delta(V_q)$, which amounts to saying that the entries of
$A_{qp}$, regarded as a matrix relative to the bases $B_q$
and $B_p$, are
real and non-negative and that $A_{qp}$ has all column sums
equal to $1$. A root
distribution $\pi \in \Delta(V_r)$ and a stochastic representation
$A$ of $T$ determine a probability distribution on $\prod_{p \in
\vertex(T)} B_p$ and, by taking marginals, a distribution on $\prod_{p
\in \leaf(T)} B_p$, which can be thought of as an element $\Phi_T(A,\pi)$
of $\Delta(L(T))$. Write $T=T_1*\cdots*T_k$
at $r$ and let $A_1,\ldots,A_k$ be the induced representations on the
$T_i$. Then the distribution is
\[ \Phi_T(A,\pi)=\sum_{b \in B_r} (b \mid \pi) (b \mid \Psi_{T_1}(A_1))
\otimes \cdots \otimes (b \mid \Psi_{T_k}(A_k)), \]
which equals $\Psi_T(A')$, where $A' \in \rep(T)$ is the (non-stochastic)
representation obtained from $A$ by composing a single $A_{pr}$ leading
away from the root with the diagonal linear map $V_r \rightarrow V_r$
determined by $b \mapsto \pi(b) b$. We define the set
\[ \CVGM(T,r):=\{ \Phi_T(A,\pi) \mid \pi \in \Delta(V_r) \text{ and } A
\in \rep(T) \text{ stochastic}.\} \]
A natural equivariant analogue of this for a $G$-tree $T$ is
\[ \CVEM(T,r):=\{ \Phi_T(A,\pi) \mid \pi \in \Delta(V_r)
\text{ $G$-invariant and } A \in \rep_G(T) \text{ stochastic}\}, \]
but as the following examples from phylogenetics show it also makes sense
to allow for arbitrary root distributions rather than $G$-invariant ones;
see below how to handle these.

\begin{ex}\label{ex:Models}
In all models below, the $B_p$ are all equal to $\{A,C,G,T\}$ and
are all equipped with the same permutation action of some $G$. Recall
that the nucleotides fall into two classes of bases, according to their
chemical structure: the purines Adenine and Guanine and the pyrimidines
Cytosine and Thymine. This explains some of the choices in the following
models. All of them are equivariant models in our sense. The labels of
these models are those used in \cite{Pachter05}.

\begin{enumerate}
\item In the {\em Jukes-Cantor model JC69} $G=\Sym(\{A,C,G,T\})$ (or
the alternating group, which has exactly the same equivariant maps $V_p
\rightarrow V_p$). One assumes a $G$-invariant root distribution---which
in this case means that it is uniform.

\item In the {\em Kimura model K80} $G$ is the dihedral group generated
by $(A,C,G,T)$ and $(A,G)$. It is the group of symmetries of
the following square.
\[ \begin{matrix}
	A & - & C\\
	| &     & |\\
	T & - & G
\end{matrix} \]
Again, the root distribution is taken $G$-invariant, which means uniform.

\item In the {\em Kimura model K81} $G$ is the Klein $4$-group and the
root distribution is $G$-invariant (uniform).

\item In the {\em strand-symmetric model CS05} $G$ generated by
the transpositions $(A,G)$ and $(C,T)$ and the root distribution is
$G$-invariant.

\item In the {\em HKY85 model} $G$ is as in the strand-symmetric model,
but one allows for non-$G$-invariant root distributions.

\item In the {\em Felsenstein model F81} $G$ is the full symmetric
(or alternating) group, and the root distribution arbitrary.
\end{enumerate}

The ideals of all these models were determined in
\cite{Casanellas05,Sturmfels05b}. Moreover, \cite{Casanellas07} gives
local equations at biologically meaningful points.
\end{ex}

\begin{re}
A similar construction of tree models appears in \cite{Buczynska07}. There
the spaces at all vertices are required to be the same space $W$,
and the tensors at the edges are allowed to vary in some fixed
subspace $\widehat{W}$ of $W \otimes W$ consisting of {\em symmetric}
tensors. Otherwise the construction of the model is the same. We should
mention that not all models obtained in this manner fit within our
framework. For instance, the model where $\widehat{W}$ is the entire space
of symmetric tensors cannot be characterised as the set of $G$-invariant
tensors in $W \otimes W$ for some group $G$ acting on $W$. Our present
approach does not apply to this setting.
\end{re}

Similar to the observations in \cite{Allman04}, as a consequence of the construction of $\CVEM(T)$, it is a closed cone (i.e.\ invariant under scalar multiplication in $L(T)$) and therefore uniquely defines a projective variety in $\PP(L(T))$, denoted $\PP(\CVEM(T))$, and defined by the same ideal as $\CVEM(T)$. Notice that because the elements of $\CVEM(T,r)$ have coordinate sum equal to $1$, $\CVEM(T,r)$ actually maps injectively into $\PP(L(T))$.
The following proposition justifies our quest for the ideal $I(\CVEM(T))$:
it contains all homogeneous polynomials vanishing on the statistically
meaningful set $\CVEM(T,r)$.

\begin{prop} \label{prop:Dense1}
Provided that all $V_p$ are non-zero, the image of the set $\CVEM(T,r)$
in $\PP(L(T))$ is Zariski dense in the variety $\PP(\CVEM(T))$.
\end{prop}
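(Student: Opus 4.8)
The plan is to show that the map $A \mapsto \Psi_T(A)$ from $\rep_G(T)$ to $L(T)$ has, after projectivisation, the same closure of its image as the map $(A,\pi) \mapsto \Phi_T(A,\pi)$ restricted to stochastic $A$ and probability root distributions $\pi$. We already know from the discussion before the proposition that $\Phi_T(A,\pi) = \Psi_T(A')$ for a representation $A'$ obtained from $A$ by rescaling one edge tensor at the root by the diagonal map $b \mapsto \pi(b)b$; hence $\CVEM(T,r)$ maps into $\PP(\CVEM(T))$. It remains to prove density. Since $\PP(\CVEM(T))$ is irreducible (it is the closure of the image of the irreducible variety $\rep_G(T)$ under a morphism), it suffices to exhibit a Zariski-dense subset of $\rep_G(T)$, all of whose images lie in the closure of the cone over $\CVEM(T,r)$.

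First I would reduce to a statement about a single star. By Theorem \ref{thm:Procedure} (or directly by the inductive definition of $\Psi_T$ and the $*$-operation), the parameterisation of $\CVEM(T)$ is built from those of the substars, and the stochastic/probabilistic structure is compatible with the gluing operation $*$: gluing stochastic representations along a leaf and normalising yields again a stochastic representation, and a product of probability distributions is a probability distribution. So the crux is the case where $T$ is a single star with centre $q$, where the claim becomes: every $G$-invariant tensor $A_{qp} \in (V_q \otimes V_p)^G$ can be approximated, up to scalar, by Zariski limits of tensors of the form coming from stochastic $G$-equivariant transition matrices and a $G$-invariant root distribution. The key step here is to observe that the set of stochastic matrices is Zariski-dense in the space of all matrices once we are allowed to rescale: the condition ``column sums equal $1$'' cuts out an affine subspace, and inside it the non-negativity constraints define a full-dimensional (real, hence Zariski-dense) cone; scaling each column by an arbitrary positive constant then sweeps out a Zariski-dense subset of all matrices with nonzero column sums, which is dense in all matrices. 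One must check this argument is compatible with the $G$-equivariance constraint --- i.e.\ that the $G$-invariant stochastic matrices, rescaled, are Zariski-dense in $(V_q \otimes V_p)^G$ --- which holds because the averaging (Reynolds) operator is continuous and maps the real stochastic cone onto a full-dimensional cone in the real points of $(V_q \otimes V_p)^G$, provided $V_q, V_p \neq 0$ so that uniform-type distributions exist. Here the hypothesis that all $V_p$ are nonzero is used: it guarantees that the probability simplices $\Delta(V_p)$ and the relevant equivariant stochastic matrices are nonempty, and that the $G$-invariant uniform root distribution exists.

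I would then assemble these pieces: starting from an arbitrary $A \in \rep_G(T)$, perturb it slightly within $\rep_G(T)$ to make all relevant column sums nonzero (a Zariski-open, hence dense, condition), rescale each edge tensor directed away from $r$ to make it stochastic at the cost of multiplying $\Psi_T(A)$ by a nonzero scalar (this rescaling only changes the point in $L(T)$ by a scalar, so it is invisible in $\PP(L(T))$), and absorb the accumulated scalar together with a choice of $G$-invariant root distribution $\pi$. This exhibits the image of $A$ in $\PP(L(T))$ as a limit of points in the image of $\CVEM(T,r)$, proving density. The main obstacle I anticipate is the bookkeeping in the star case: verifying carefully that the interaction of the three constraints --- $G$-equivariance, the transpose/symmetry condition $A_{pq}=A_{qp}^t$, and stochasticity --- still leaves a Zariski-dense set of reachable tensors, and in particular that rescaling columns to fix the stochasticity normalisation is consistent with the bilinear-form identifications used to define $\rep_G(T)$. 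A secondary technical point is making precise the claim that the $*$-construction intertwines the statistical picture (products of marginals) with the algebraic picture ($\Psi_T$), but this is essentially the computation displayed just before the proposition, applied inductively over the internal vertices of $T$.
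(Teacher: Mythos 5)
There is a genuine gap in the central step of your assembly argument, and it is precisely the step the paper handles with a nontrivial construction. You assert that one may ``rescale each edge tensor directed away from $r$ to make it stochastic at the cost of multiplying $\Psi_T(A)$ by a nonzero scalar,'' and that ``this rescaling only changes the point in $L(T)$ by a scalar, so it is invisible in $\PP(L(T))$.'' This is false in general. Consider a star with centre $q$ and leaves $p_1,p_2,p_3$: one has $\Psi_T(A) = \sum_{b\in B_q} A_{p_1q}b\otimes A_{p_2q}b\otimes A_{p_3q}b$. Replacing the $b$-th column of $A_{p_1q}$ by its normalisation (dividing by the column sum $\sigma_b$) produces $\sum_b \sigma_b^{-1} A_{p_1q}b\otimes A_{p_2q}b\otimes A_{p_3q}b$, which is not a scalar multiple of the original tensor unless all $\sigma_b$ coincide. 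In a general tree the effect compounds at every internal vertex, and in $\PP(L(T))$ this changes the point. The ``accumulated scalar'' you propose to absorb into $\pi$ does not exist as a single scalar.

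The paper's proof fixes exactly this: at each internal vertex $q$ it introduces the subtorus $H_q$ of $\prod_{p\sim q}(K^*)^{B_q/G}$ subject to the constraint $\prod_{p\sim q} c_{p,b}=1$ for all $b$, which by design leaves $\Psi_T$ \emph{invariant} (not merely invariant up to scalar). Acting with these tori bottom-up one normalises all but one column sum to $1$ without changing $\Psi_T(A)$ at all; only for the single leftover tensor $A_{pr}$ at the root does one divide by the column sums $\sigma_b$, and those get reabsorbed as the root distribution $\pi=\sigma/s$, where $s=\sum_b\sigma_b$. Your proposal also suggests reducing to the star case via the $*$-operation; this is not how the paper argues and is problematic because the stochastic normalisation at a gluing vertex $q$ is relative to the root $r$, so the two sides of the gluing do not decouple cleanly. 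You would need to prove a compatibility lemma rather than appeal to Theorem \ref{thm:Procedure}, which concerns ideals, not parameterisations. The local density observations you make (stochastic matrices dense in matrices with column sums $1$, and the role of the nonvanishing of $V_p$ in ensuring nonempty simplices) do match the paper and are sound; what is missing is the torus-action bookkeeping that makes the global normalisation work.
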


\begin{proof}
First, the set of root distributions on $B_r$ is clearly Zariski-dense
in the set of all (complex) $\pi \in V_r$ with $\sum_{b \in B_r} \pi(b)=1$.
Similarly, for a single edge $pq$ pointing away from $r$, the stochastic
matrices in $\Hom_G(V_p,V_q)$ are Zariski dense in the complex matrices
in $\Hom_G(V_p,V_q)$ with column sums $1$. This follows from an
explicit parameterisations of such equivariant stochastic matrices $A$:
for every $b \in B_p/G$ the $b$-th column of $A$ varies in a certain
(scaled probability) simplex of dimension $|B_q/G_b|-1$, where $G_b$
is the stabiliser of $b$ in $G$. This simplex is dense in the subset of
$V_q^{G_b}$ where the sum of the coordinates is $1$.

Next we claim that for $A$ in an open dense subset of $\rep_G(T)$ we
can write $\Psi_T(A)$ as $s \Phi_T(A',\pi)$ for some $A' \in \rep_G(T)$
having column sums $1$ and some $\pi$ with $\sum_b \pi(b)=1$.  To see
this, first fix a vertex $q$ and take for every vertex $p \sim q$ a
copy of the torus $(K^*)^{B_q/G}$, considered as the diagonal subgroup
of $\GL(V_q)$ acting by multiplication by a scalar on the span of each
$G$-orbit on $B_q$ and hence centralising $G$ on $V_q$. The copy for
$p$ acts on $\Hom(V_p,V_q)$ by $g(A):=gA$ and on $\Hom(V_q,V_p)$ by
$g(A):=Ag$. Note that this latter action is {\em not} the natural one
on $\Hom(V_q,V_p)$, in which $g$ would be replaced by its inverse, but
that this action {\em is} the natural one on $V_q \otimes V_p$; see also
Lemma \ref{lm:GTAction} and the remarks preceding it.  In particular,
this action maps representations to representations. A straightforward
computation shows that the subtorus
\[ H_q:=\{ (c_{p,b})_{b \in B_q/G, p \sim q} \in
	\prod_{p \sim q} (K^*)^{B_q/G}
\mid \prod_{p \sim q} c_{p,b} = 1 \text{ for all } b \in B_q \} \]
leaves $\Psi_T$ invariant. Now let $A$ be any $G$-representation of $T$
such that all column sums of all $A_{qp}$ directed away from $r$ are
non-zero; this is an open dense condition on $A$.  Given any non-root
vertex $q$, by acting with $H_q$ we can achieve that the $A_{qp}$
leading away from $r$ have column sums $1$, while the map $A_{qp}$,
where $p$ is the parent of $q$ relative to $r$, may not. If we do this
for all non-root vertices in a bottom-up manner, and finally also for
$H_r$, then we achieve that all $A_{qp}$ leading away from $r$ have
column sums $1$, except for a single $A_{pr}$; note that we have not
altered $\Psi_T(A)$ in this process. Denote the column sums of $A_{pr}$
by $(\sigma_b)_{b \in B_r}$. Dividing column $b$ of $A_{pr}$ by $\sigma_b$
gives a representation $A'$ all of whose matrices leading away from $r$
have column sums $1$. Also, for $A$ in an open dense subset, $\sum_b
\sigma_b=:s$ is non-zero, and dividing $\sigma$ by $s$ gives a $\pi$
adding up to $1$ such that $s \Phi(A',\pi)=\Psi(A)$.  This proves the
claim, and hence the proposition.
\end{proof}

As we saw in the examples above, one may want to allow arbitrary root
distributions, which are not necessarily $G$-invariant. More generally,
one might want to allow the root distribution to vary in a certain
self-dual submodule of $V_r$, and this would require only minor changes
in the discussion that follows---but here we concentrate on the situation
where all elements of (the probability simplex in) $V_r$ are allowed. We
define the set
\[ \CVEM(T,V_r):=\{ \Phi_T(A,\pi) \mid A \in \rep_G(T) \text{ stochastic and }
	\pi \in \Delta(V_r) \}. \]
One can elegantly describe $\CVEM(T,V_r)$ as follows. Let $T'$ be the
spaced tree obtained from $T$ by connecting a new vertex $r'$ to the root
$r$ of $T$ and attaching to $r'$ the $G$-module $V_{r'}:=V_r$, endowed
with the same bilinear form. Then $L(T') = V_{r'}\otimes L(T)$, and
since $V_{r'} \cong V_{r'}^*$ we may think of $\CVEM(T')$ as a subset
of $\Hom(V_{r'},L(T))$. Since it also consists of $G$-fixed points,
and since this identification is $G$-equivariant, it is a subset of
$\Hom_G(V_{r'},L(T))$.

\begin{prop} \label{prop:Dense2}
The image of $\CVEM(T,V_r)$ in $\PP(L(T))$
is a Zariski dense subset in the projective variety associated to the cone
\[ \overline{\CVEM(T')V_{r'}}, \]
where we regard $\CVEM(T')$ as a subset of
$\Hom_G(V_{r'},L(T))$. The ideal of this cone can be determined from
the ideal of $\CVEM(T')$.
\end{prop}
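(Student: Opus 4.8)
The plan is to describe the cone $\overline{\CVEM(T')V_{r'}}$ explicitly as a set of contractions of the tensors $\Psi_{T'}(A'')$, to match it against the statistically meaningful set $\CVEM(T,V_r)$ by transporting the density argument in the proof of Proposition~\ref{prop:Dense1}, and finally to read off the ideal-theoretic statement from elimination theory. First, for the description of the cone: root $T'$ at $r$ and write it as the $*$-product, along $r$, of the edge $rr'$ and the branches $T_p$ of $T$ at $r$ (one for each $p\sim r$ in $T$). Unwinding the definitions of $\Psi$ and $*$ gives, for $A''\in\rep_G(T')$,
\[ \Psi_{T'}(A'')=\sum_{b\in B_r}(b\mid A''_{r'r})\otimes\bigotimes_{p\sim r}(b\mid\Psi_{T_p}(A''_p))\ \in\ V_{r'}\otimes L(T). \]
Regarding this as an element of $\Hom_G(V_{r'},L(T))$ via $V_{r'}\cong V_{r'}^*$ and contracting it against $\pi\in V_{r'}$ gives $\sum_b\mu(b)\bigotimes_p(b\mid\Psi_{T_p}(A''_p))$, where $\mu=\sum_b(\pi\mid(b\mid A''_{r'r}))\,b\in V_r$ depends linearly on $\pi$; comparing with the formula for $\Phi_T$ recalled above, this equals $\Phi_T(A''|_T,\mu)$. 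Conversely, if $A''$ extends a given $A\in\rep_G(T)$ by the $G$-invariant ``identity tensor'' $\sum_{b\in B_r}b\otimes b$ on the edge $rr'$, the same computation gives $\Psi_{T'}(A'')(\pi)=\Phi_T(A,\pi)$. Hence $\{\Psi_{T'}(A'')(\pi)\mid A''\in\rep_G(T'),\ \pi\in V_{r'}\}=\{\Phi_T(A,\pi)\mid A\in\rep_G(T),\ \pi\in V_r\}$; since $\{\Psi_{T'}(A'')\}$ is Zariski dense in $\CVEM(T')$ and contraction is a morphism, the common closure of these two sets is exactly $\overline{\CVEM(T')V_{r'}}$.

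It then remains to show that the image of $\CVEM(T,V_r)=\{\Phi_T(A,\pi)\mid A\text{ stochastic},\ \pi\in\Delta(V_r)\}$ in $\PP(L(T))$ is Zariski dense in $\PP(\overline{\CVEM(T')V_{r'}})$, which by the above means dense in $\PP(\overline{\{\Phi_T(A,\pi):A\in\rep_G(T),\pi\in V_r\}})$. For this I would rerun the argument in the proof of Proposition~\ref{prop:Dense1} with the root distribution carried along. For generic $A\in\rep_G(T)$ and generic $\pi\in V_r$, write $\Phi_T(A,\pi)=\Psi_T(A^{(\pi)})$ with $A^{(\pi)}\in\rep(T)$ obtained by composing the edge map $A_{pr}$ of a fixed child $p$ of $r$ with $\mathrm{diag}(\pi)$; then act on $A^{(\pi)}$ by the tori $H_q$ ($q\in\internal(T)$, bottom-up, and finally $H_r$), which leaves $\Psi_T$ unchanged and makes all edge maps directed away from $r$ have column sums $1$, except for $A^{(\pi)}_{pr}$. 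The key point is that the column sums of this leftover map are $\sigma_b\pi(b)$ with $\sigma$ orbit-constant---because $A$ is a $G$-representation---so dividing column $b$ by $\sigma_b$ keeps the map $G$-equivariant and recombines all edges into a stochastic $G$-representation $\widetilde A$ over $\CC$, while $\pi$ is absorbed (after rescaling to coordinate sum $1$) into a complex root distribution $\widetilde\pi$ which, for $\pi$ generic, ranges densely among all such. Since real stochastic $G$-representations are dense among complex equivariant matrices of column sums $1$ and $\Delta(V_r)$ is dense among complex vectors of coordinate sum $1$, the real data recover all of this in the limit. I expect this normalisation bookkeeping---checking that the leftover column sums split cleanly into an orbit-constant $G$-equivariant factor and an arbitrary root distribution, which is exactly the extra freedom of $\CVEM(T,V_r)$ over $\CVEM(T,r)$---to be the main obstacle; the remaining steps are formal.

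Finally, for the ideal: $\overline{\CVEM(T')V_{r'}}$ is the closure of the image of the affine variety $\CVEM(T')\times V_{r'}$ under the bilinear---hence polynomial---contraction $(\Phi,v)\mapsto\Phi(v)$ from $\Hom(V_{r'},L(T))\times V_{r'}$ to $L(T)$. The ideal of $\CVEM(T')\times V_{r'}$ is just the extension of $I(\CVEM(T'))$ to the coordinate ring of $L(T')\times V_{r'}$, so standard elimination theory applies: adjoining to $I(\CVEM(T'))$ the entries of $y-\Phi(v)$ in the coordinate ring of $L(T)\times L(T')\times V_{r'}$ (with $y$ the coordinates on $L(T)$) and eliminating the $L(T')$- and $V_{r'}$-coordinates yields $I(\overline{\CVEM(T')V_{r'}})$. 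Being the ideal of a cone, this ideal is homogeneous and therefore also defines the associated projective variety in $\PP(L(T))$; together with the previous paragraphs this proves the proposition.
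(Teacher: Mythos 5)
Your proof is correct but takes a genuinely different route from the paper's on both remaining parts, and in both cases the paper's route is cleaner or more in the spirit of its computational goals.

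For the density statement, you re-run the normalisation of Proposition~\ref{prop:Dense1} on $T$, carrying the factor $\mathrm{diag}(\pi)$ along and then checking at the end that the leftover column sums factor as a $G$-orbit-constant part $\sigma_b$ times $\pi(b)$. This works, but it requires verifying that the equivariance-breaking $\mathrm{diag}(\pi)$ on one edge passes cleanly through all the $H_q$-normalisations (it does, because diagonal matrices commute and the $H_q$ modify the distinguished edge only by orbit-constant diagonal factors on the left and right), and your phrase ``dividing column $b$ by $\sigma_b$ keeps the map $G$-equivariant'' is a shade imprecise since the leftover $B\,\mathrm{diag}(\pi)$ itself is not equivariant; what you really mean is that $B\,\mathrm{diag}(1/\sigma)$ is. The paper sidesteps this bookkeeping entirely by doing all the normalisation on $T'$, where the root distribution lives as an honest edge: one writes $\Psi_{T'}(A)\pi=\Phi_T(A,A_{r'r}\pi)$, so the unnormalised column sums of $A_{r'r}$ are absorbed into the root distribution in one line, and $G$-equivariance is never broken.

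For the ideal statement, you invoke general elimination theory on the map $\CVEM(T')\times V_{r'}\to L(T)$. This is certainly a valid proof that the ideal ``can be determined'', but it calls on Gr\"obner-basis elimination, which is exactly what this paper is engineered to avoid. The paper's proof instead views $V_{r'}$ as $\Hom_G(KG,V_{r'})=M_{\bl,\bm}$, identifies $\CVEM(T')$ with a $\GL_{\bl}$-stable subscheme of $M_{\bk,\bl}$, and applies Corollary~\ref{cor:VM} to obtain an \emph{explicit} finite generating set: the $(l_i+1)$-minors of the isotypic blocks together with the coefficients (in the $\Psi_0$-variables) of $f(\Psi\Psi_0)$ as $f$ runs over generators of $I(\CVEM(T'))$. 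This is a genuinely stronger computational conclusion, in line with Algorithm~\ref{alg:Procedure}, and is the point of developing Section~\ref{sec:Multiplying}.
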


\begin{proof}
We have
\[\Phi_T(A,\pi)=\Psi_{T'}(A')\pi,\]
where $A'$ is obtained from $A$ by putting the identity $I_{V_r}$
along the edge $rr'$. This shows that $\Phi_T(A,\pi)$ is contained
in $\CVEM(T')V_{r'}$. For the converse we reason as before: for $A'$
in an open dense subset of $\rep_G(T')$ we can write $\Psi_{T'}(A')$
as $\Psi_{T'}(A)$ where all $A_{qp}$ directed away from $r$ have column
sums equal to $1$ except possibly for $A_{r'r}$. We have
\begin{align*} \Psi_{T'}(A)\pi&=
\sum_{b \in B_r} (A_{r'r} b | \pi)
(b \mid \Psi_{T_1}(A_1)) \otimes \ldots \otimes
(b \mid \Psi_{T_m}(A_m)) \\
&=
\sum_{b \in B_r} (b | A_{r'r} \pi)
(b \mid \Psi_{T_1}(A_1)) \otimes \ldots \otimes
(b \mid \Psi_{T_m}(A_m)) \\
&=
\Phi_T(A,A_{rr'} \pi)=s\Phi_T(A,\pi'),
\end{align*}
where $s$ is taken such that $\pi':=s^{-1} A_{r'r}\pi$ has $\sum_b
\pi'(b)=1$.

The proof of the last statement is deferred to the end of Section~\ref{sec:Proofs}.
\end{proof}

\section{Multiplying varieties of matrices}
\label{sec:Multiplying}

In this section we derive a key tool that will be used in the proofs
of our results. As before let $K$ be an algebraically closed field of
characteristic $0$, and let $t$ be a natural number. For $\bk,\bl \in
\NN^t$ let $M_{\bk,\bl}$ denote the space $M_{k_1,l_1} \times \cdots
\times M_{k_t,l_t}$, where $M_{k,l}$ is the space of $p \times q$-matrices
over $K$. To formulate and prove our results in their full strength,
it is convenient to use some notions from the language of schemes, for
which we refer to \cite{Hartshorne77}. The main point here is that we
do not require ideals to be radical.

Recall that if $X$ is an affine variety, then a closed subscheme $S$
of $X$ is given by an ideal $I$ of the ring $K[X]$ of regular functions
on $X$: the underlying closed subset of $X$ is the set of zeros of $I$,
and the $K$-algebra associated to $S$ is $K[X]/I$. We write $I(S)$
for the ideal of the subscheme $S$.  If $X$ is a variety on which some
group $\Gamma$ acts, then $S$ is called a $\Gamma$-subscheme if and
only if $I(S) \subseteq K[X]$ is $\Gamma$-stable.  Finally, if $f \colon X
\to Y$ is a map between varieties, inducing the pull back homomorphism
$f^\sharp \colon K[Y] \to K[X]$, and if $S \subseteq X$ is a subscheme,
then the \emph{image scheme} of $S$ is defined as the scheme theoretic
closure of $f(S)$, i.e.\ the subscheme of $Y$ defined by the ideal
$(f^\sharp)^{-1}(I(S))$. By slight abuse of notation it is usually denoted
as $f(S)$. It is clear that if $f$ is $\Gamma$-equivariant for some group
$\Gamma$ acting on $X$ and $Y$, then the image of a $\Gamma$-subscheme
is again a $\Gamma$-subscheme. Also notice that if $S$ is a subvariety,
i.e., if $I(S)$ is radical, then so is $f(S)$---it is precisely the
Zariski closure of the set-theoretic image of $S$ under $f$.

We now specialise to multiplying schemes of matrices. We write $\mu \colon
M_{\bk,\bl} \times M_{\bl,\bm} \to M_{\bk,\bm}$ for the multiplication
and $\mu^\sharp$ for the co-multiplication, $\mu^\sharp(f)(A,B) =
f(A\cdot B)$. Given two subschemes $V \subseteq M_{\bk,\bl}$ and $W\subseteq
M_{\bl,\bm}$, put
\[ V \cdot W := \mu(V,W). \]
If $V,W$ are subvarieties this is just
\[\overline{\{A B \mid A \in V, B \in W\}},\]
where $AB:=(A_1B_1, \dots,  A_tB_t)$. In general, the underlying
topological space is still the closure of the set $\mu(V,W)$, but the
ideal is $(\mu^\sharp)^{-1}(I(V \times W))$.

The operation $\cdot$ is associative in the sense that, given a third
subscheme $U \subseteq M_{\bj,\bk}$, one has
\[ U \cdot (V \cdot W)=(U \cdot V) \cdot W\]
we therefore simply write $U \cdot V \cdot W$.

Let $\GL_{\bl} = \GL_{l_1} \times \GL_{\l_2} \times \dots \times \GL_{l_t}
\subseteq M_{\bl}$.  Frequently the subschemes we are interested in will
be invariant by left- or right-multiplication by $\GL_{\bl}$. In this
context it is worth mentioning that a subscheme $V
\subseteq M_{\bk,\bl}$
is a $\GL_{\bl}$-subscheme, i.e., stable by right-multiplication, if
and only if $V \cdot M_{\bl,\bl} = V$. This follows from the fact that
for any $K$-algebra $A$ and any nonconstant $f \in K[x_1,\dots,x_m]$,
$\Spec A[x_1,\dots,x_m]_f$ is dense in $\Spec A[x_1,\dots,x_m]$, and
thus $V \cdot \GL_\bl$ is (scheme theoretically) dense in $V \cdot M_\bl$.

\begin{thm} \label{thm:VW}
Let $\bk,\bl,\bm \in \NN^t$ and let $V$ and $W$ be subschemes of
$M_{\bk,\bl}$ and $M_{\bl,\bm}$, respectively. Then we have
\begin{equation} \label{eq:VW}
I(V \cdot M_{\bl,\bl} \cdot W) = I(V \cdot M_{\bl,\bm}) +
I(M_{\bk,\bl} \cdot W).
\end{equation}
\end{thm}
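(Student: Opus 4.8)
The inclusion $\supseteq$ in \eqref{eq:VW} is the easy direction: since $M_{\bl,\bm} = M_{\bl,\bl}\cdot M_{\bl,\bm}$ contains products $I_{\bl}\cdot B$ with $B \in W$, we have $V\cdot M_{\bl,\bm} \supseteq V\cdot M_{\bl,\bl}\cdot W$ set-theoretically, and scheme-theoretically $I(V\cdot M_{\bl,\bm}) \subseteq I(V\cdot M_{\bl,\bl}\cdot W)$ because $\cdot$ is monotone on ideals (the pullback characterization $I(\mu(X,Y)) = (\mu^\sharp)^{-1}(I(X\times Y))$ makes this formal). Likewise $I(M_{\bk,\bl}\cdot W) \subseteq I(V\cdot M_{\bl,\bl}\cdot W)$. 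So the content is the inclusion $\subseteq$: a polynomial vanishing on $V\cdot M_{\bl,\bm}$ plus one vanishing on $M_{\bk,\bl}\cdot W$ together generate everything vanishing on $V\cdot M_{\bl,\bl}\cdot W$.

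The plan is to pass to the "universal" situation and exploit invariance under $\GL_{\bl}\times\GL_{\bl}$ acting on $M_{\bl,\bl}$ by left and right multiplication. First I would observe that $V\cdot M_{\bl,\bl}\cdot W$ can be computed as the image scheme of $V\times M_{\bl,\bl}\times W$ under the triple multiplication map $m\colon M_{\bk,\bl}\times M_{\bl,\bl}\times M_{\bl,\bm}\to M_{\bk,\bm}$. Because $M_{\bl,\bl}$ is stable under $\GL_{\bl}\times\GL_{\bl}$ and the products $V\cdot g$, $h\cdot W$ are already accounted for (using $V\cdot M_{\bl,\bl} = V\cdot\GL_{\bl}$ dense, and symmetrically), the scheme $V\cdot M_{\bl,\bl}\cdot W$ is insensitive to replacing $V$ by $V\cdot M_{\bl,\bl}$ and $W$ by $M_{\bl,\bl}\cdot W$; so without loss of generality $V$ is right-$\GL_{\bl}$-stable (i.e.\ $V\cdot M_{\bl,\bl}=V$) and $W$ is left-$\GL_{\bl}$-stable. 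Under this reduction $V\cdot M_{\bl,\bl}\cdot W = V\cdot W$, while the right-hand side is $I(V\cdot M_{\bl,\bm}) + I(M_{\bk,\bl}\cdot W) = I(V) + I(W)$ interpreted appropriately on $M_{\bk,\bm}$ — more precisely, $V\cdot M_{\bl,\bm}$ is the pullback of $V$ along projection-like multiplication, and similarly for $W$. The claim then becomes: $I(V\cdot W) = I(V\cdot M_{\bl,\bm}) + I(M_{\bk,\bl}\cdot W)$ when $V,W$ are one-sided $\GL_{\bl}$-stable.

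To prove this reduced statement I would work on the product $M_{\bk,\bl}\times M_{\bl,\bm}$ with coordinate ring $R = K[x_{ij}^{(s)}, y_{jk}^{(s)}]$ and the comultiplication $\mu^\sharp\colon K[M_{\bk,\bm}] \to R$. The ideal $I(V\cdot W)$ is $(\mu^\sharp)^{-1}(I(V\times W)) = (\mu^\sharp)^{-1}(I(V)R + I(W)R)$. The key algebraic fact I want is a \emph{normal form} or \emph{flatness/splitting} statement: that the subring $\mu^\sharp(K[M_{\bk,\bm}])\subseteq R$ — the ring of functions of the matrix product $X\cdot Y$ — has a complement as a module, or equivalently that one can "separate variables." Concretely, I would use the $\GL_{\bl}$-action on the middle: the product $XY$ is a $\GL_{\bl}$-invariant of the pair, and by classical invariant theory (first fundamental theorem for $\GL_{\bl}$ acting on vectors and covectors) the invariant ring $K[M_{\bk,\bl}\times M_{\bl,\bm}]^{\GL_{\bl}}$ is generated by the entries of $XY$; moreover this invariant ring is a direct summand of $R$ as a $\GL_{\bl}$-module (Reynolds operator, since $\GL_{\bl}$ is linearly reductive in characteristic $0$). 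Using the Reynolds operator $\rho\colon R\to R^{\GL_{\bl}}$, for $f\in K[M_{\bk,\bm}]$ one gets: $f\in I(V\cdot M_{\bl,\bm}) + I(M_{\bk,\bl}\cdot W)$ iff $\mu^\sharp f$ lies in the corresponding ideal of $R$; then apply $\rho$ and use that $V$ is right-$\GL_{\bl}$-invariant so $I(V)R$ is $\GL_{\bl}$-stable (and similarly for $W$) to push everything back down to $M_{\bk,\bm}$. The main obstacle will be this step: turning the set-theoretic statement "$AB$ with $A\in V$, $B\in W$" into a clean scheme-theoretic (ideal) identity requires controlling $(\mu^\sharp)^{-1}$ of a sum of ideals, which does not commute with sums in general; the resolution is precisely the linear reductivity of $\GL_{\bl}$ together with the one-sided invariance of $V$ and $W$, which makes the relevant ideals $\GL_{\bl}$-stable so that the Reynolds operator is exact on the relevant sequences. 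Once that splitting is in hand, chasing the definitions of the image schemes finishes the proof; I would then unwind the reduction (restoring general $V$, $W$ by the density argument $V\cdot M_{\bl,\bl}=V\cdot\GL_{\bl}$ stated before the theorem) to recover \eqref{eq:VW} in full.
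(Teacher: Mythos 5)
Your plan is essentially the paper's proof: reduce to $\GL_{\bl}$-stable $V$, $W$ by replacing them with $V\cdot M_{\bl,\bl}$ and $M_{\bl,\bl}\cdot W$, split $\mu^\sharp(f) = h_1 + h_2 \in I(V\times M_{\bl,\bm}) + I(M_{\bk,\bl}\times W)$, apply the Reynolds operator for $\GL_{\bl}$ (which keeps the summands in the respective $\GL_{\bl}$-stable ideals), and lift the resulting invariants back to $K[M_{\bk,\bm}]$ via the surjection furnished by the First Fundamental Theorem. The one step left implicit in your sketch is that the lifts $\bar h_1,\bar h_2$ need not sum to $f$ in $K[M_{\bk,\bm}]$; but $f - \bar h_1 - \bar h_2$ is killed by $\mu^\sharp$, hence vanishes on $M_{\bk,\bl}\cdot M_{\bl,\bm}$ and therefore lies in both $I(V\cdot M_{\bl,\bm})$ and $I(M_{\bk,\bl}\cdot W)$, which closes the argument.
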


In the following proof we will use the First Fundamental Theorem
in invariant theory, which describes the invariant polynomials of
$\lieg{GL}_n$ on a direct sum of copies of $K^n$ and the dual space
$(K^n)^*$. This theorem is due to Weyl \cite{Weyl39}.  Another tool from
representation theory is the Reynolds operator: when a reductive group
acts rationally on a vector space, then the Reynolds operator $\rho$
is the projection onto the invariant vectors with kernel the direct
sum of all non-trivial irreducible submodules.
Modern treatments on
invariant theory are \cite{Derksen02,Goodman98,Kraft96}.

\begin{proof}
The inclusion $\supseteq$ is obvious. To prove the opposite inclusion we
first replace $V$ by $V \cdot M_{\bl,\bl}$ and $W$ by $M_{\bl,\bl} \cdot
W$---this clearly keeps invariant both sides of
\eqref{eq:VW}, and moreover turns $V$ and $W$ into $\lieg{GL}_{\bl}$-subschemes, where
$\lieg{GL}_\bl:=\lieg{GL}_{l_1} \times \cdots \times \lieg{GL}_{l_t}$, which acts on $M_{\bk,\bl} \times M_{\bl,\bm}$
by \[(g_1,\dots,g_t)((A_1,\dots,A_t),(B_1,\dots,B_t)) = ((A_1g_1^{-1},\dots, A_{t}g_t^{-1}),(g_1B_1,\dots,g_tB_t))\]
Let $f \in I(V \cdot W)$, which now equals the left-hand side of
\eqref{eq:VW}. Define $h \in K[M_{\bk,\bl} \times M_{\bl,\bm}]$
by $h = \mu^\sharp(f)$, so that $h(A,B)=f(AB)$. Then $h$ is invariant with respect to the action of
$\lieg{GL}_{\bl}$, and moreover $h$ is in the ideal of $V \times W$. This latter fact implies that
\[ h \in I(V \times M_{\bl,\bm}) + I(M_{\bk,\bl} \times W);
\]
split $h=h_1+h_2$ accordingly. Applying the Reynolds operator
$\rho:K[M_{\bk,\bl} \times M_{\bl,\bm}] \to K[M_{\bk,\bl} \times
M_{\bl,\bm}]^{\lieg{GL}_\bl}$ yields $h=\rho(h_1) + \rho(h_2)$. By
$\lieg{GL}_\bl$-invariance of $V$ and $W$, $\rho(h_1)$ and $\rho(h_2)$
still are elements of $I(V \times M_{\bl,\bm})$ and $I(M_{\bk,\bl} \times W)$,
respectively.  Furthermore, $\rho(h_1)$ and $\rho(h_2)$ lie in
\[ K[M_{\bk,\bl} \times M_{\bl,\bm}]^{\lieg{GL}_\bl}
= K[M_{k_1,l_1} \times M_{l_1,m_1}]^{\lieg{GL}_{l_1}}
\otimes \cdots \otimes K[M_{k_t,l_t} \times
M_{l_t,m_t}]^{\lieg{GL}_{l_t}}. \]
By the First Fundamental Theorem for $\lieg{GL}_{l_i}$
applied to $k_i$ covectors and $m_i$ vectors the pullback of
multiplication $M_{k_i,l_i} \times M_{l_i,m_i} \to
M_{k_i,m_i}$ is a surjective homomorphism
\[ K[M_{k_i,m_i}] \to
K[M_{k_i,l_i} \times M_{l_i,m_i}]^{\lieg{GL}_{l_i}} \]
for all $i=1,\ldots,t$. Hence the pullback of multiplication $M_{\bk,\bl}
\times M_{\bl,\bm} \to M_{\bk,\bm}$ is a surjective homomorphism
\[ K[M_{\bk,\bm}] \to
K[M_{\bk,\bl} \times M_{\bl,\bm}]^{\lieg{GL}_{\bl}}; \]
let $\rho(h_1),\rho(h_2)$ lift under this surjection to
$\bar{h}_1,\bar{h}_2 \in K[M_{\bk,\bm}]$, respectively. Note that
$\bar{h}_1,\bar{h}_2$ are not unique if $l_i<\min\{k_i,m_i\}$ for some $i$,
but this is irrelevant here. We now have $\bar{h}_1 \in I(V \cdot
M_{\bl,\bm}), \bar{h}_2 \in I(M_{\bk,\bl} \cdot W)$. Moreover, restricted
to the image $M_{\bk,\bl}\cdot M_{\bl,\bm}$ we have $f = \bar{h}_1 +
\bar{h}_2$, since
\[
f(AB)=h(A,B)=\rho(h_1)(A,B)+\rho(h_2)(A,B)=\bar{h}_1(AB)+\bar{h}_2(AB)
\]
for all $A \in M_{\bk,\bl}, B \in M_{\bl,\bm}$. Hence $f' := f -
(\bar{h_1} + \bar{h}_2)$ vanishes on $M_{\bk,\bl}\cdot M_{\bl,\bm}$,
which contains both $V \cdot M_{\bl,\bm}$ and $M_{\bk,\bl}\cdot W$. But
then $f'$ lies in both $I(V \cdot M_{\bl,\bm})$ and $I(M_{\bk,\bl} \cdot
W)$, and hence $f \in I(V \cdot M_{\bl,\bm}) + I(M_{\bk,\bl} \cdot W)$
as claimed.
\end{proof}
We will use Theorem~\ref{thm:VW} to describe the ideal of $V\cdot W$ explicitly from the ideals of $V$ and $W$.
Before we can do this we need one more tool.
Suppose $\alpha \colon X \times K^n \to Y$ is a morphism
where $X$ and $Y$ are affine varieties over $K$, and
suppose $S$ is a closed subscheme of $Y$ defined by an
ideal $I$. Then there exists a uniquely determined
subscheme $S'$ of $X$ such that $\alpha(S' \times K^n) \subseteq
S$ and such that $S'$ is maximal with this property.
Scheme-theoretically, $S'$ is equal to $S' = \bigcap_{v \in
K^n} i_v^{-1}(\alpha^{-1}(S))$ where for $v \in K^n$, $i_v$
is the inclusion $X \to X \times \{v\} \subseteq X \times K^n$.
The ideal of $S'$ is determined as follows: Let $I'$ be the ideal in $K[X]$ generated by all functions of the form $f'_v = \alpha^\sharp(f)(\cdot,v) = f(\alpha(\cdot, v))$ where $f\in I$ and $v\in K^n$ is an arbitrary (closed) point. In other words, $f'_v(x) = f(\alpha(x,v))$ for $x\in X$.

Since $K[X\times K^n] = K[X]\otimes_K K[x_1,x_2,\dots,x_n]$, for any
$f\in K[Y]$, we may write $\alpha^\sharp(f)$ uniquely as
\begin{equation}\label{eq:alpha_sharp}
\alpha^\sharp(f) = \sum_i h_i \otimes m_i
\end{equation}
where the $m_i$ are some linearly independent monomials in
$K[x_1,x_2,\dots,x_n]$, and $h_i \in K[X]$.  It is now elementary to
check that $I'$ is generated by all $h_i\in K[X]$ that appear in such
an expression \eqref{eq:alpha_sharp} as $f$ runs through $I$.  Indeed,
the ideal generated by $f'_v$ where $v$ ranges over $K^n$ is precisely
the ideal generated by all $h_1,h_2,\dots,h_s$. This is easily seen
by picking $s$ points $v_1,v_2,\dots,v_s$ in $K^n$ such that $\det
[m_i(v_j)] \neq 0$, which is possible as $K$ is infinite and the $m_i$
are supposed to be linearly independent.  This observation is important
because it shows how to compute a finite list of generators for $I'$
out of finitely many generators for $I$: if $f_1,f_2,\dots,f_m$ generate
$I$, then the (finite) collection of all $h_i$s appearing in one of the
$\alpha^\sharp(f_j)$s generates $I'$.

We will apply this construction to the case where $X = M_{\bk,\bl}, K^n
= M_{\bl,\bm}$ and $\alpha = \mu$ equal to matrix multiplication. Then
for $V \subseteq M_{\bk,\bm}$, the ideal $I(V')$ is generated by all
functions on $M_{\bk,\bl}$ of the form $f(xB)$ where $B \in M_{\bl,\bm}$
is arbitrary, and $f \in I(V)$. In fact, thinking of the entries of $B$
as variables, we may expand $f(xB)$ as a polynomial in the entries of $B$;
the coefficients are then the required elements of the ideal of $V'$.
See also Example \ref{ex:Toric}.

The following corollary is crucial for explicit computations; it is
a slight generalisation of \cite[Lemma~12]{Allman04}, which gives the
same equations for $V \cdot M_{\bl,\bm}$. Although there the result
is stated only for subvarieties, their proof should also go through
essentially unchanged.

\begin{cor} \label{cor:VM}
Let $\bk,\bl,\bm \in \NN^t$ and let $V$ be a subscheme of $M_{\bk,\bl}$
with $V=V\cdot M_{\bl,\bl}$. Define the scheme $R_{\bl} \subseteq
M_{\bk,\bm}$ by the ideal generated by all $(l_i+1)$-minors of the $i$-th
component, for all $i=1,\ldots,t$. Then we have
\begin{equation} \label{eq:VM}
I(V \cdot M_{\bl,\bm})=I(V')+I(R_{\bl})
\end{equation}
where $V'$ is defined as in the preceding paragraph as the unique maximal
subscheme of $M_{\bk,\bm}$ such that $V'\cdot M_{\bm,\bl} \subseteq V$.
\end{cor}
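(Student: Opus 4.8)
The plan is to establish the two inclusions of \eqref{eq:VM} separately. Write $\mu\colon M_{\bk,\bl}\times M_{\bl,\bm}\to M_{\bk,\bm}$ for the multiplication map, and let $\GL_\bl$ act on $M_{\bk,\bl}\times M_{\bl,\bm}$ by $g\cdot(A,B)=(Ag^{-1},gB)$, so that $\mu$ is $\GL_\bl$-invariant. The inclusion ``$\supseteq$'' of \eqref{eq:VM} is the easy one. That $I(R_\bl)\subseteq I(V\cdot M_{\bl,\bm})$ follows from Theorem~\ref{thm:VW} applied with $W=M_{\bl,\bm}$ (which turns its left-hand side back into $I(V\cdot M_{\bl,\bm})$) together with the classical fact that $\ker(\mu^\sharp)=I(R_\bl)$, the ideal of $(l_i+1)$-minors; the same fact says that $\mu^\sharp$ identifies $K[R_\bl]$ with the invariant ring $K[M_{\bk,\bl}\times M_{\bl,\bm}]^{\GL_\bl}$, which I use below. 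That $I(V')\subseteq I(V\cdot M_{\bl,\bm})$ holds because $V\cdot M_{\bl,\bm}$ is one of the subschemes $Z\subseteq M_{\bk,\bm}$ with $Z\cdot M_{\bm,\bl}\subseteq V$: by associativity $(V\cdot M_{\bl,\bm})\cdot M_{\bm,\bl}=V\cdot(M_{\bl,\bm}\cdot M_{\bm,\bl})\subseteq V\cdot M_{\bl,\bl}=V$, the last equality being the hypothesis $V=V\cdot M_{\bl,\bl}$; hence by maximality $V\cdot M_{\bl,\bm}\subseteq V'$ as subschemes, so $I(V')\subseteq I(V\cdot M_{\bl,\bm})$.

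For the reverse inclusion I would work modulo $I(R_\bl)$, which is harmless since $I(R_\bl)$ lies in both ideals. Under the identification $K[M_{\bk,\bm}]/I(R_\bl)\cong K[M_{\bk,\bl}\times M_{\bl,\bm}]^{\GL_\bl}$, the image of $I(V\cdot M_{\bl,\bm})=(\mu^\sharp)^{-1}(I(V\times M_{\bl,\bm}))$ is $\bigl(I(V)\otimes K[M_{\bl,\bm}]\bigr)^{\GL_\bl}$, while the image of $I(V')$ is $\mu^\sharp(I(V'))$; by the previous paragraph the latter is contained in the former, so it suffices to prove the opposite containment. Using the description of the generators of $I(V')$ from the $\alpha$-construction applied to the multiplication $M_{\bk,\bm}\times M_{\bm,\bl}\to M_{\bk,\bl}$, together with the remark about specialising the auxiliary variables, $\mu^\sharp(I(V'))$ is generated, as an ideal of the invariant ring, by the invariants $(A,B)\mapsto p(ABD)$ with $p\in I(V)$ and $D\in M_{\bm,\bl}$. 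So the whole problem reduces to showing that $\bigl(I(V)\otimes K[M_{\bl,\bm}]\bigr)^{\GL_\bl}$ is contained in the ideal generated by those invariants.

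This is where $V=V\cdot M_{\bl,\bl}$ enters essentially: it makes $I(V)$ a $\GL_\bl$-submodule of $K[M_{\bk,\bl}]$, hence $I(V)\otimes K[M_{\bl,\bm}]$ a $\GL_\bl$-stable ideal, and applying the Reynolds operator $\rho$ of $\GL_\bl$ shows that $\bigl(I(V)\otimes K[M_{\bl,\bm}]\bigr)^{\GL_\bl}$ is generated over the invariant ring by the elements $\rho\bigl(p(A)\,q(B)\bigr)$, where $p$ runs over a finite generating set of $I(V)$ and $q$ over monomials in the entries of $B$. It then remains to express each $\rho(p(A)q(B))$ in terms of the invariants $p'(ABD)$, $p'\in I(V)$, $D\in M_{\bm,\bl}$. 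This is essentially the mechanism of \cite[Lemma~12]{Allman04}, carried over to the scheme setting: one first uses $V=V\cdot M_{\bl,\bl}$ to write $p(AY)=\sum_t r_t(A)\,s_t(Y)$ with all $r_t\in I(V)$ and the $s_t$ monomials, substitutes $Y=BD$, reads off the coefficients in the entries of $D$, and matches $\GL_\bl$-isotypic components by means of the First Fundamental Theorem \cite{Weyl39} to see that the pairing computing $\rho(p(A)q(B))$ does occur among these coefficients. I expect this matching --- equivalently, the assertion that the $\GL_\bl$-invariant part of the extended ideal $I(V)\otimes K[M_{\bl,\bm}]$ is fully ``reconstructed'' by the division construction defining $V'$ --- to be the only non-formal step; granting it, the two inclusions together give \eqref{eq:VM}. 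Note that no comparison between $\bl$ and $\bm$ is needed, since the invariant theory treats all cases uniformly.
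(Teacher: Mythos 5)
Your verification of the easy inclusion $\supseteq$ is fine, and in fact a bit more elaborate than needed (the paper just notes $V\cdot M_{\bl,\bm}\subseteq R_\bl$ directly rather than routing through Theorem~\ref{thm:VW}); your observation that $V\cdot M_{\bl,\bm}\subseteq V'$ by associativity and $V=V\cdot M_{\bl,\bl}$ is exactly the paper's argument.

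For the opposite inclusion, however, there is a genuine gap. You correctly reduce, modulo $I(R_\bl)$ and via the FFT isomorphism $K[M_{\bk,\bm}]/I(R_\bl)\cong K[M_{\bk,\bl}\times M_{\bl,\bm}]^{\GL_\bl}$, to showing that $\bigl(I(V)\otimes K[M_{\bl,\bm}]\bigr)^{\GL_\bl}$ is contained in the ideal of the invariant ring generated by the $D$-coefficients of $p(ABD)$, $p\in I(V)$. But the entire content of the corollary now sits in this one claim, and you never prove it: the final paragraph says only that you ``expect this matching [...] to be the only non-formal step; granting it'' the proof is complete. The Reynolds-operator reduction you invoke is itself not quite right as stated --- for $f=\sum_j h_j p_j$ with $p_j$ generators of $I(V)$ and $h_j$ arbitrary, $\rho(f)=\sum_j\rho(h_jp_j)$ decomposes into terms $\rho(p'(A)q(B))$ with $p'\in I(V)$ \emph{arbitrary} (not merely a generator), so restricting to generators requires a further (unaddressed) argument. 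In short, you have reduced to the hard step and then assumed it.

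The paper avoids this entirely. Its proof of the $\subseteq$ inclusion consists of a single application of Theorem~\ref{thm:VW}, with $(\bk,\bl,\bm,V,W)$ replaced by $(\bk,\bm,\bm,V',W)$ where $W:=M_{\bm,\bl}\cdot M_{\bl,\bm}$. Since $V'\cdot M_{\bm,\bl}\subseteq V$, one has $I(V\cdot M_{\bl,\bm})\subseteq I(V'\cdot M_{\bm,\bl}\cdot M_{\bl,\bm})=I(V'\cdot M_{\bm,\bm}\cdot W)$, and Theorem~\ref{thm:VW} evaluates the latter as $I(V'\cdot M_{\bm,\bm})+I(M_{\bk,\bm}\cdot W)=I(V')+I(R_\bl)$. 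All the invariant theory you were trying to redo by hand is already encapsulated in Theorem~\ref{thm:VW}; the corollary follows by choosing the right schemes to feed into it. The lesson is to look for a substitution that turns the target statement into an instance of the already-proved theorem, rather than to unwind the theorem's proof in a special case.
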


It is well known that $M_{\bk,\bl}\cdot M_{\bl,\bm} = R_{\bl}$ as schemes,
and in particular that the ideal of $R_{\bl}$ is radical.

\begin{proof}
The inclusion $\supseteq$ follows from $V \cdot M_{\bl,\bm} \subseteq
V' \cap R_\bl$: First, $V \cdot M_{\bl,\bm} \subseteq R_{\bl} =
M_{\bk,\bl}\cdot M_{\bl,\bm}$ is clear. Second, $V \cdot M_{\bl,\bl} =
V$ implies that $V\cdot M_{\bl,\bm}\cdot M_{\bm,\bl} \subseteq V$, i.e.\
$V\cdot M_{\bl,\bm}\subseteq V'$.

For the opposite inclusion,
set $W:=M_{\bm,\bl} \cdot M_{\bl,\bm}$, and apply Theorem \ref{thm:VW} with
$(\bk,\bl,\bm,V,W)$ replaced by $(\bk,\bm,\bm,V',W)$.  Indeed,
$V' \cdot M_{\bm,\bl} \subseteq V$ by definition of $V'$, so that the left-hand
side of \eqref{eq:VM} is contained in $I(V' \cdot M_{\bm,\bl} \cdot
M_{\bl,\bm})$, which is the left-hand side of \eqref{eq:VW} with $(V,W)$
replaced by $(V',W)$. With this substitution the right-hand side of
\eqref{eq:VW} reads
\[ I(V' \cdot M_{\bm,\bm}) + I(M_{\bk,\bm} \cdot W) \]
which, as $M_{\bk,\bm} \cdot W=R_{\bl}$, equals the
right-hand side of $\eqref{eq:VM}$.
\end{proof}
The corollary is the reason why we had to use subschemes instead of
subvarieties: in general, $V'$ is not a variety even if $V$ is, so the
ideal $\sqrt{I(V')}$ of functions vanishing on the closed points of $V'$
may be larger than $I(V')$, and hard to compute. However, the corollary
shows that to compute $I(V \cdot M_{\bl,\bm})$ only the ideal $I(V')$
is needed, and for this ideal generators can be found as described above.

We will apply Theorem \ref{thm:VW} and its corollary in the following
setting: Let $V$ be a representation of $G$, $\Omega$ the set of all
irreducible characters, and for $\omega \in \Omega$ denote by $M_\omega$
a fixed irreducible representation of type $\omega$.  Then $V \cong
\bigoplus_{\omega\in \Omega} V[\omega]$, where $V[\omega]$ is the sum of
all submodules of $V$ isomorphic to $M_{\omega}$. Moreover, $V[\omega]
\cong M_{\omega} \otimes \Hom_G(M_\omega, V) \cong M_{\omega} \otimes
K^{m(\omega,V)}$ with $m(\omega,V)$ the multiplicity of $M_\omega$ in $V$.
In particular, if $W$ is another representation of $G$, then the space
of equivariant maps $V \to W$ is
\[ \Hom_G(V,W) \cong \bigoplus_{\omega \in \Omega} \Hom(K^{m(\omega,V)},K^{m(\omega,W)}).\]

The varieties we are interested in are subvarieties of $\Hom_G(V,W)$
stable by multiplication with $\End_G(V)$ or $\End_G(W)$ where $V,W$ are
some representations of $G$. So let $U,V,W$ be three representations of
$G$, and suppose $S \subseteq \Hom_G(U,V)$ and $T \subseteq \Hom_G(V,W)$
are subvarieties or subschemes. To apply Theorem~\ref{thm:VW}, we
may identify $\Hom_G(V,W)$ with $M_{\bk,\bl}$, $\Hom_G(U,V)$ with
$M_{\bl,\bm}$, and $\End_G(V)$ with $M_{\bl,\bl}$, by putting $l_i
= m(\omega_i,V)$,$k_i = m(\omega_i,W)$, and $m_i = m(\omega_i,U)$,
respectively, where $\Omega = \{\omega_1,\omega_2,\dots,\omega_t\}$.

With these identifications in place, the ideal of $T \cdot \End_G(V)
\cdot S$ is equal to $I(T \cdot M_{\bl,\bm}) + I(M_{\bk,\bl}\cdot
S)$. Similarly, if $T$ is stable by right-multiplication with
$\End_G(V,V)$, then the ideal of $T\cdot \Hom_G(U,V)$ may be computed
using Corollary~\ref{cor:VM}. We will see several applications of this
in the next section.

\section{Proofs of the main results} \label{sec:Proofs}

Before proving our main results, we investigate how $\Psi_T$ and
$\CVEM(T)$ behave under base changes. Thus let $T$ be a $G$-tree and write
$\GL_T$ for the product $\prod_{p \in \vertex(T)} \GL(V_p)^G$.  On the
one hand, this group acts on $\rep_G(T)$ by the action of $\GL(V_q)^G
\times \GL(V_p)^G$ on $(V_q \otimes V_p)^G$. We already encountered a
special case of this action in the proof of Proposition \ref{prop:Dense1}.
On the other hand, given $h \in \GL_T$ one can define a new $G$-spaced
tree $hT$ as follows: the underlying tree of $hT$ is the same as that of
$T$ and the space $V_p$ at each vertex is also the same as that of $T$,
but the bilinear form $(.|.)'_p$ is determined by
\[ (h_p u | h_p v)'_p:=(u | v)_p \text{ for } u,v \in V_p, \]
where the latter bilinear form is the one assigned to $p$ in $T$. Finally,
a vertex $p$ is based in $hT$ if and only if it is based in $T$, and
then the basis associated to it in $hT$ is $B_p':=hB_p$, where $B_p$
is the distinguished basis of $V_p$ in $T$. A representation $A$ of $T$
gives a representation of $hT$, also denoted $A$, by simply taking the
same tensors $A_{qp} \in V_q \otimes V_p$ along the edges.

\begin{lm} \label{lm:GTAction}
In the setting above we have $\Psi_{hT}(A)=h\Psi_T(h^{-1}A)$.
\end{lm}

\begin{proof}
If $T$ has only two vertices $p \sim q$, then this just the obvious
equality $A_{qp}=(h_q,h_p)(h_q^{-1},h_p^{-1})A_{qp}$. If $T$ has more than
two vertices, we pick any internal vertex $q$ of $T$ and split $T=*_i T_i$
and $A=*_i A_i$ at $q$. Assuming the result for all $T_i$ we find
\begin{align*}
\Psi_{hT}(A)
&= \sum_{b \in B_q'} \otimes_i (b | \Psi_{hT_i}(A_i))' \\
&= \sum_{b \in B_q'} \otimes_i (b | h\Psi_{T_i}(h^{-1}A_i))' \\
&= \sum_{b \in B_q} \otimes_i (h_q b | h\Psi_{T_i}(h^{-1}A_i))' \\
&= h \sum_{b \in B_q} \otimes_i (b | \Psi_{T_i}(h^{-1}A)) \\
&= h \Psi_{hT}(h^{-1} A).
\end{align*}
\end{proof}

In particular, this lemma implies that $\CVEM(T)=\CVEM(hT)$. For $G=\{1\}$
we note that if $T'$ is any spaced tree with the same underlying tree
as $T$ and the same $G$-modules $V_p$ at the vertices, but different
bilinear forms and different (orthonormal) bases, then there exists an
$h \in \GL_T$ with $hT=T'$. In this sense the variety $\CVGM(T)$ does
not depend on the chosen bases and forms, as long as they are compatible.

\begin{re} \label{re:ValencyTwo}
A stronger basis-independency holds at vertices of valency
two. There the operation $*$ boils down to matrix multiplication, or
composition of linear maps, and this will enable us to apply Theorem
\ref{thm:VW}. Indeed, let $U,V,W$ be vector spaces equipped with
non-degenerate symmetric bilinear forms and let $\Psi_1 \in W \otimes V$
and $\Psi_2 \in V \otimes U$ be arbitrary. Let $B$ be any orthonormal
basis of $V$. We claim that the element
\[ \sum_{b \in B} (b|\Psi_1) \otimes (b|\Psi_2) \in W \otimes U \]
does not depend on $B$, and under the identification $U \cong
U^*$ coincides with the linear map $U \rightarrow W$ which is the
composition of $\Psi_1$ and $\Psi_2$, considered as linear maps under the
identifications $W \otimes V=W \otimes V^*=\Hom(V,W)$ and $V \otimes U=V
\otimes U^*=\Hom(U,V)$. It suffices to verify this for rank-one tensors
$\Psi_1=w \otimes v$ and $\Psi_2=v' \otimes u$. The expression above
is then
\[ [\sum_{b \in B} (b|v)(b|v')] w \otimes u. \]
By the orthonormality of $B$ this reduces to $(v|v') w \otimes u$,
as claimed.

Thus, in hindsight, we could have left out the orthonormal bases at
vertices of valency $2$ in the definition of ($G$-)spaced trees, and
defined the operation $*$ as composition of linear maps. We have not
done so to keep the treatment of internal vertices uniform.
\end{re}

Next we observe that the map $\Psi_T$ defined in Section \ref{sec:Setup}
behaves well with respect to the group action. Let $T$ be a $G$-spaced
tree. Note that $G$ acts naturally on $\rep(T)$ by its action on each
tensor product $V_p \otimes V_q$ with $p \sim q$.

\begin{lm}
The map $\Psi_T \colon \rep(T) \to L(T)$ is $G$-equivariant.
\end{lm}

\begin{proof}
If $T$ has exactly two vertices the assertion is immediate. Otherwise,
let $q$ be an inner vertex of $T$ and split $T$ around $q$ into
$T_1,\ldots,T_k$. The $T_i$ are $G$-trees in a natural way. By
induction, we may assume that $\Psi_{T_i}$ is an equivariant map. Then
\begin{align*}
\Psi_T(gA) &= \sum_{b \in B_q} (b \mid \Psi_{T_1}(gA_1)) \otimes \dots
\otimes (b \mid \Psi_{T_k}(gA_k))\\
&= \sum_{b \in B_q} (b \mid g\Psi_{T_1}(A_1)) \otimes \dots \otimes
(b \mid g \Psi_{T_k}(A_k))\\
&= g \sum_{b \in B_q} (g^{-1} b \mid \Psi_{T_1}(A_1)) \otimes \dots \otimes
(g^{-1} b \mid g \Psi_{T_k}(A_k))\\
&= g \Psi_T(A),
\end{align*}
where the second equality follows from the $G$-invariance of $(. \mid
.)_q$ and the last equality follows from the fact that $g^{-1}$ permutes
$B_q$.
\end{proof}

This lemma implies that $\CVEM(T) \subseteq L(T)^G$. In what follows we
focus on the ideal of $\CVEM(T)$ inside $K[L(T)^G]$. To
obtain the ideal inside $K[L(T)]$, one just adds linear
equations cutting out $L(T)^G$ from $L(T)$.

\begin{re} \label{re:ValencyTwoB}
For $k=2$ the computation in the proof of the lemma can be replaced by the
following argument, using the notation of Remark \ref{re:ValencyTwo}. If
$U,V,W$ are $G$-modules and $V$ carries a $G$-invariant symmetric bilinear
form, then the unique bilinear map $W \otimes V \times V \otimes U
\rightarrow W \otimes U$ sending $(w \otimes v,v' \otimes u)$ to $(v|v')
w \otimes u$ is $G$-equivariant. So at vertices of valency $2$ it is
not crucial that $G$ permutes the basis.
\end{re}

With these preparations, we are now ready to prove our first main result.

\begin{proof}[Proof of Theorem \ref{thm:Procedure}]
Let $T$ be a $G$-spaced tree. We recursively express the ideal of
$\CVEM(T)$ into the ideals of $\CVEM(S)$ for substars $S$ of $T$ with
at least three leaves, as follows. First, if $T$ has only two vertices
$p \sim q$, then $\CVEM(T)=(V_p \otimes V_q)^G=L(T)^G$ and we are
done. Second, if $T$ is itself a star with at least three leaves, then
we are also done.  Third, suppose that $T$ contains a vertex $q$
of valency $2$, and split $T$ accordingly as $T = T_1 * T_2$, so that
\[ \CVEM(T)=\overline{ \{\Psi_1 * \Psi_2 \mid
	\Psi_i \in \CVEM(T_i) \text{ for } i=1,2 \}}. \]
Let $L_1$ be the space $\bigotimes_{p \in \leaf(T_1) \setminus \{ q\}} V_p$ and
$L_2 = \bigotimes_{p \in \leaf(T_2) \setminus q} V_p$.  Of course $L_1,L_2$ are
naturally $G$-representations. Now the map $(\Psi_1,\Psi_2) \mapsto
\Psi_1 * \Psi_2$ from $L(T_1)^G \times L(T_2)^G$ to $L(T)^G$ is just
matrix multiplication if we identify $L(T_1)^G$ with $\Hom_G(V_p,L_1)$
and $L(T_2)^G$ with $\Hom_G(L_2,V_p)$; see Remark
\ref{re:ValencyTwo}.

We want to apply Theorem \ref{thm:VW}. Recall the
definition of $\omega_i$ and $m(\omega_i,V)$ from
Section~\ref{sec:Multiplying}.  Now define $\bk,\bl,\bm$
by $k_i := m(\omega_i,L_1)$, $l_i  := m(\omega_i,V_q)$
and $m_i := m(\omega_i,L_2)$.  Then $\Hom_G(V_q,L_1) = M_{\bk,\bl}$,
$\Hom_G(L_2,V_q) = M_{\bl,\bm}$ and $\End_G(V_q) = M_{\bl,\bl}$.  Notice
that $V:= \CVGMG(T_1)$ (resp. $W := \CVGMG(T_2)$) are stable under right-
(resp. left-) multiplication with $M_{\bl,\bl}$, and $\CVGMG(T) = V\cdot
W = V \cdot M_{\bl,\bl} \cdot W$. Thus Theorem~\ref{thm:VW} applies and
we deduce that
\[ I(\CVGMG(T)) = I(\CVGMG(T_1) * \CVGMG(\flat_qT_2)) +
I(\CVGMG(\flat_q T_1)*\CVGMG(T_2),\]
where $\CVGMG(\flat_qT_i) \cong \Hom_G(L_i,V_q) \cong \Hom_G(V_q,L_i)
\cong (L_i \otimes V_q)^G$ because $G$ acts preserving the form.
Recursively, we may assume that the ideals of $\CVGMG(T_1)$ and $\CVGMG(T_2)$ have been computed.
Finally, the two terms on the right-hand side can be expressed into $I(\CVEM(T_1))$
and $I(\CVEM(T_2))$ using Corollary \ref{cor:VM}:
Following the recipe at the end of Section~\ref{sec:Multiplying}, we may compute e.g.\ $I(\CVGMG(T_1)*\CVGMG(\flat_q T_2))
= I(\CVGMG(T_1)\cdot \Hom_G(L_2,V_q))$.
This concludes the
case where $T$ contains a vertex of valency $2$.

Finally, if $T$ is neither a star nor contains a vertex of valency two,
then it contains an edge $p \sim r$ where both $p$ and $r$ are internal
vertices of valency at least three. Let $T'$ be the $G$-tree obtained
from $T$ by inserting two vertices $q_1$ and $q_2$ between $p$ and $r$ so
that $p \sim q_1 \sim q_2 \sim r$, setting $V_{q_1}:=V_r$ with the same
bilinear form and basis, and $V_{q_2}:=V_p$ with the same bilinear form
and basis. Note that every $G$-spaced substar of $T'$ with at least three
leaves is also a $G$-spaced substar of $T$. This is why we inserted two
vertices rather than one: what space should we attach to a single vertex
between $p$ and $q$? See below for a comment on this. By the previous
construction, we can express the ideal of $\CVEM(T')$ in the ideals
of $\CVEM(S)$ of all substars $S$ of $T'$, hence of $T$, with at least
three leaves. So we are done if can show that $\CVEM(T)=\CVEM(T')$. But
any $A' \in \rep_G(T')$ gives rise to an $A \in \rep_G(T)$ by setting
$A_{pr}:=A'_{pq_1} A'_{q_1q_2} A'_{q_2r}$. Using Remark
\ref{re:ValencyTwo}
one finds that $\Psi_T(A)=\Psi_{T'}(A')$. Conversely, for any $A \in
\rep_G(T)$ we can factorise $A_{pr}$ into $A'_{pq_1}A'_{q_1q_2}A'_{q_2r}$
with $G$-invariant factors. This gives a representation $A'$ of $T'$
with $\Psi_{T'}(A')=\Psi_T(A)$. This concludes the proof of the theorem.
\end{proof}

\begin{re}
Note that for $G=1$, i.e., for the general Markov model, the proof above
can be simplified slightly: one does not need the decomposition into
isotypic components, and may apply Theorem \ref{thm:VW} with
$t=1$ directly.
\end{re}

The proof above yields to the following algorithm for computing
$I(\CVEM(T))$ from the ideals of substars.

\begin{alg}\label{alg:Procedure}\
\begin{description}
\item[Input] a $G$-spaced tree $T$ and finite generating sets of the
ideals $I(\CVEM(S))\subseteq K[L(S)^G]$ for all substars $S$ in $T$
with at least three leaves.
\item[Output] a finite generating set of the ideal $I(\CVEM(T))\subseteq
K[L(T)^G]$.
\item[Procedure]\
\begin{enumerate}
\item If $T$ contains only two vertices $p \sim q$, then
return the empty set and quit.
\item If $T$ is a star with at least three leaves, then a finite
generating set of $I(\CVEM(T))$ is part of the input;
return this set and quit.
\item If $T$ has a vertex of valency $2$, then choose such a vertex $q$
and split $T=T_1 * T_2$ at $q$. Apply this algorithm to $T_1$ and to
$T_2$ (with the ideals of their substars) to find finite generating sets
$F_1,F_2$ of the ideals of $\CVEM(T_1)$ and $\CVEM(T_2)$, respectively.
Let $L_1,L_2$ be as in the proof of theorem \ref{thm:Procedure},
identify $L(T)^G=\Hom_G(L_1,L_2)$ with $M_{\bk,\bm}$ as in that proof, and
write $\bl$ for the tuple of multiplicities $m(\omega_i,V_q)$.  Identify
$L(T_1)$ with $M_{\bk,\bl}$ and $L(T_2)$ with $M_{\bl,\bm}$, so that $F_1
\subseteq K[M_{\bk,\bl}]$ and $F_2 \subseteq K[M_{\bl,\bm}]$. Write $\Psi$
for an element in $M_{\bk,\bm}$ whose coordinates are variables.
\begin{enumerate}
\item Let $F'$ denote the collection of all $(l_i+1) \times
(l_i+1)$-minors of the $i$-th component of $\Psi$ for all
$i=1,\ldots,t$.
\item For an element $\Psi_0 \in M_{\bm,\bl}$ with new
variables as coordinates, expand $f(\Psi \cdot \Psi_0)$ for
each $f \in F_1$, and take all coefficients of monomials in
$\Psi_0$, which are polynomials in $\Psi$. Collect these
polynomials in $F_1'$.
\item For an element $\Psi_0 \in M_{\bk,\bl}$ with new
variables as coordinates, expand $f(\Psi_0 \cdot \Psi)$ for
each $f \in F_2$, and take all coefficients of monomials in
$\Psi_0$, which are polynomials in $\Psi$. Collect these
polynomials in $F_2'$.
\end{enumerate}
Return $F' \cup F_1' \cup F_2'$ and quit.
\item Take an edge $p \sim r$ in $T$ connecting two vertices
of valency at least three. Construct a $G$-tree $T'$ by
inserting two new vertices $q_1,q_2$ such that $p \sim
q_1 \sim q_2 \sim r$ and setting $V_{q_1}:=V_r$ with the
same basis and bilinear form and $V_{q_2}:=V_p$ with the
same basis and bilinear form. Run this algorithm on $T'$,
return the same output as for $T'$, and quit.
\end{enumerate}
\end{description}
\end{alg}

Although the tree grows in the last step, it is easy to see that this
algorithm terminates: after inserting vertices, in the call with $T'$
the tree is broken into two trees, each of which have strictly less
substars with at least three vertices. This algorithm is
partly carried out in Example \ref{ex:Toric}.

\begin{re}\label{re:ValencyTwoC}
In the last step of both the proof and the algorithm we
could also have inserted a single vertex $q$ between $p$ and
$r$, with $V_q$ equal to the $G$-module having multiplicities
$m(\omega_i,V_q)=\min\{m(\omega_i,V_p),m(\omega_i,V_r)\}$ for all
$i$, so that all $G$-equivariant maps $V_r \rightarrow V_p$ factorise
through $V_q$. One can show that this $V_q$ carries a $G$-invariant,
non-degenerate symmetric bilinear form since $V_p$ and $V_r$ do. This
set-up would have raised two minor problems. First, the object $T'$
thus constructed is strictly speaking not a $G$-tree, as $V_q$ may
not have an orthogonal basis permuted by $G$. But as we saw in Remarks
\ref{re:ValencyTwo} and \ref{re:ValencyTwoB} this is not really a problem:
we can still apply Theorem \ref{thm:VW} at $q$ to split $T'$ into smaller
trees. Second, the $G$-spaced stars $S_p',S_r'$ with centres $p$ and $r$
in $T'$ are not equal to the $G$-spaced stars $S_p,S_r$ around $p$ and $r$
in $T$. Hence after expressing $I(\CVEM(T'))$ in the ideals $I(\CVEM(S))$
for all stars $S$ with at least three leaves in $T'$, we still need to
express the ideals of $\CVEM(S'_p)$ and $\CVEM(S'_r)$ in $\CVEM(S_p)$ and
$\CVEM(S_r)$, respectively, to prove the theorem. The following lemma
does just that. This would give a slight variant of the algorithm above.
\end{re}

\begin{lm}\label{le:PushFwd}
Let $S',S$ be $G$-spaced stars with the same underlying star having $q$
as centre and $p_1,\ldots,p_k$ as leaves. Suppose that both stars have
the same space $V_q$ with the same basis $B_q$ and that we are given
$G$-equivariant injections $\tau_i:V_{p_i}' \rightarrow V_{p_i}$ for
$i=1,\ldots,k$, where $V_{p_i}'$ and $V_{p_i}$ are attached to $p_i$
in $S'$ and $S$, respectively. Denote by $\tau$ the induced injection
$L(S') \rightarrow L(S)$. Then
\[ I(\CVEM(S'))=\tau^\sharp I(\CVEM(S)). \]
In particular, a finite generating set for $I(\CVEM(S))$ gives a finite
generating set for $I(\CVEM(S'))$ under pull-back by $\tau$.
\end{lm}

\begin{proof}
For the inclusion $\supseteq$ note that any $G$-representation
$A=(A_{p_i,q})_i$ of $S$ gives rise to a representation $A'=(\tau_i
A_{p_i,q})_i$ of $S'$ satisfying $\tau\Psi_S(A)=\Psi_{S'}(A')$. Hence $\tau$
maps $\CVEM(S)$ into $\CVEM(S')$.

For the inclusion $\subseteq$ note that, as $V_{p_i}$ is a completely
reducible $G$-module, there exist $G$-equivariant surjections
$\pi_i:V_{p_i}' \rightarrow V_{p_i}$ with $\pi_i \tau_i=\id_{V_{p_i}}$.
Now the induced projection $\pi:L(S')\rightarrow L(S)$ maps $\CVEM(S')$ into
$\CVEM(S)$, and if $f \in I(\CVEM(S))$, then
$f=\tau^\sharp(\pi^\sharp f)$, where $\pi^\sharp f$ lies in $I(\CVEM(S'))$.
\end{proof}

Now we prove our second main result.

\begin{proof}[Proof of Theorem \ref{thm:Flattening}.]
Recall the statement of the theorem: for any $G$-tree $T$ we have
\[ \tag{*} \label{eq:VMG} I(\CVGMG(T)) =
	\sum_{q \in \vertex(T)}I(\CVGMG(\flat_qT)). \]
We proceed by induction. First, the statement is a tautology for a star
$T$. Next, suppose that $T$ has an inner vertex $q$ of valency $2$ and
split $T=T_1*T_2$ at $q$.  By induction we may assume that the theorem
holds for $T_i$. The proof of Theorem \ref{thm:Procedure} shows that
\[ I(\CVEM(T))=I(\CVEM(T_1)*\CVEM(\flat_q T_2)) +
	I(\CVEM(\flat_q T_1)*\CVEM(T_2)), \]
so it suffices to prove that each of these terms is contained
in the right-hand side of \eqref{eq:VMG}; we do so for the first
term. We use the notation $L_1,L_2,\bk,\bl,\bm$ from the proof of
Theorem \ref{thm:Procedure}.  By Corollary \ref{cor:VM} the ideal of
$\CVEM(T_1)*\CVEM(\flat_q T_2)$ is spanned by polynomials in $\Psi \in
L(T)=\Hom_G(L_2,L_1)$ of the following two forms:
\begin{enumerate}
\item for all $i=1,\ldots,t$ the $(l_i+1)$-minors of the $i$-th component
of $\Psi$, regarded as an element of $M_{\bk,\bm}$; and
\item all polynomials of the form $\Psi \mapsto f(\Psi\Psi_0)$, where
$f \in I(\CVEM(T_1))$ and $\Psi_0$ is some element of $\Hom_G(V_p,L_2)$.
\end{enumerate}
The first type of elements are clearly equations for $\CVGM(\flat_q T)$,
so we need only worry about the second type of equations. By induction we
may assume that $f$ is an equation for $\CVGM(\flat_{r} T_1)$ for some
vertex $r$ of $T_1$. But then $\Psi \mapsto f(\Psi\Psi_0)$ vanishes on
$\CVGM(\flat_r T)$, and we are done.

Finally, if $T$ is not a star and does not contain a vertex of valency
$2$, then we proceed as in the proof of Theorem~\ref{thm:Procedure}.
We choose an edge $p \sim r$ in $T$ where both $p$ and $r$ have valency
at least three, and insert vertices $q_1,q_2$ with $p \sim q_1 \sim
q_2 \sim r$ and $V_{q_1}:=V_r$ and $V_{q_2}:=V_p$ to obtain a new
$G$-spaced tree $T'$. We claim that both sides of \eqref{eq:VMG}
remain unchanged in replacing $T$ by $T'$. For the left-hand side
this was proved in the proof of Theorem~\ref{thm:Procedure}. The
right-hand side gains two terms, namely, $I(\CVEM(\flat_{q_1} T'))$ and
$I(\CVEM(\flat_{q_2} T'))$.  However, the definition of flattening readily
implies that $\CVEM(\flat_p T) \subseteq \CVEM(\flat_{q_2} T')$, so that
$I(\CVEM(\flat_{q_2} T')) \subseteq I(\CVEM(\flat_p T))$.  Similarly,
we find $I(\CVEM(\flat_{q_1}T')) \subseteq I(\CVEM(\flat_r T))$, and
hence the two extra terms on the right-hand side of \eqref{eq:VMG}
are redundant. Now the theorem for $T$ follows from that for $T'$,
which in turn follows by induction as in the previous case.
\end{proof}

Finally, we prove the last statement of Proposition \ref{prop:Dense2}
which says that $\overline{\CVEM(T')V_{r'}}$ can be computed from the
ideal of $\CVEM(T')$.

\begin{proof}[Proof of Proposition \ref{prop:Dense2}.]
We will apply Corollary \ref{cor:VM} where $\CVEM(T')$ will play the
role of $V$ and $V_{r'}$ will play the role of $M_{\bl,\bm}$. To this
end we proceed as in the proof of Theorem \ref{thm:Procedure}. First
set $L:=\bigotimes_{p \in \leaf(T')\setminus\{r'\}}V_p$ and let
$\omega_1,\ldots,\omega_t$ be the characters of $G$.  Next define
$\bk,\bl,\bm$ as follows: $k_i:=m(\omega_i,L)$, $l_i:=m(\omega_i,V_{r'})$,
and $m_i:=m(\omega_i,KG)$ with $KG$ the left regular representation
of $G$. View $\CVEM(T')$ as a subvariety of $\Hom_G(V_{r'},L)$, which
can be thought of as $M_{\bk,\bl}$, and view $V_{r'}$ as the space
$\Hom_G(KG,V_{r'})$ under the isomorphism $v \mapsto (g \mapsto gv)$,
which can be thought of as $M_{\bl,\bm}$.  It is easy to see that
$\CVEM(T')$ is closed under composition with $\Hom_G(V_{r'},V_{r'})$,
which is $M_{\bl,\bl}$, so that Corollary \ref{cor:VM} applies. We
conclude that $I(\CVEM(T')V_{r'})$, regarded as a subset of $M_{\bk,\bm}$
is generated by the rank-$(l_i+1)$-minors of the $i$-th block for
$i=1,\ldots,t$ and the polynomials $L \rightarrow K$ of the form $\Psi
\mapsto f(\Psi \Psi_0)$, where $\Psi$ is regarded as a $G$-homomorphism
$KG \rightarrow L$, $f$ runs over $I(\CVEM(T'))$ and $\Psi_0$ over all
elements of $\Hom_G(V_r,KG)$, which is $M_{\bl,\bm}$.

Of course, like in Algorithm \ref{alg:Procedure}, this can be made
into a finite set of generators by taking the entries of $\Psi_0$ to
be variables, taking $f$ in a finite generating set of $I(\CVEM(T'))$,
expanding, and taking the coefficients of the monomials in $\Psi_0$.
\end{proof}

We have now reduced the ideals of our equivariant models to those for
stars, and argued their relevance for statistical applications. The
main missing ingredients for successful applications are equations
for star models. These are very hard to come by: \cite{Garcia05}
posed several conjectures concerning these for the general
Markov model, and special cases of these conjectures were proved in
\cite{Allman04,Landsberg04,Landsberg06}. For certain important equivariant
models equations were found in \cite{Casanellas05, Sturmfels05b}. Roughly
speaking, the less symmetry one imposes on the model, the harder it is
to find equations. The following proposition offers some explanation
for this.

\begin{prop} \label{prop:GeneralStar}
Suppose that $T$ is a star with (based) centre $r$. Let $b_1,\ldots,b_s$
be representatives of the $G$-orbits on $B_r$ and denote by $G_i$ the
stabiliser of $b_i$ in $G$. Let $C_i$ denote the cone of pure tensors
in $\bigotimes_{p \in \leaf(T)} (V_p^{G_i}) \subseteq L(T)$, and denote
by $\rho$ the Reynolds operator for $G$. Then
\[ \CVEM(T)=\overline{\rho(C_1)}+\ldots+\overline{\rho(C_s)}, \]
where the addition corresponds to taking the join of these varieties.
\end{prop}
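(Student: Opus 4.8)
The plan is to unwind the definition of $\CVEM(T)$ for a star directly from the parameterisation $\Psi_T$, and to reorganise the sum over the basis $B_r$ at the centre according to $G$-orbits. Write $T = *_{p \sim r} T_p$ where each $T_p$ is the single edge $\{p,r\}$. A $G$-representation $A \in \rep_G(T)$ is a tuple $(A_{pr})_{p \sim r}$ with each $A_{pr} \in (V_p \otimes V_r)^G = \Hom_G(V_r, V_p)$, and by the defining formula
\[ \Psi_T(A) = \sum_{b \in B_r} \bigotimes_{p \sim r} (b \mid \Psi_{T_p}(A_p)) = \sum_{b \in B_r} \bigotimes_{p \sim r} A_{pr} b, \]
using that $\Psi_{T_p}(A_p) = A_{pr}$ viewed as an element of $V_p \otimes V_r$ and that contracting with $b$ using $(.\mid.)_r$ gives $A_{pr} b$ under the identification $V_q \otimes V_p = \Hom(V_p, V_q)$. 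First I would group the sum over $B_r$ into $G$-orbits: $\Psi_T(A) = \sum_{i=1}^s \sum_{b \in Gb_i} \bigotimes_{p \sim r} A_{pr} b$. For fixed $i$, the inner sum is $\sum_{g \in G/G_i} \bigotimes_p A_{pr}(g b_i) = \sum_{g \in G/G_i} g \bigl( \bigotimes_p A_{pr} b_i \bigr)$ by $G$-equivariance of each $A_{pr}$, which (up to the constant $|G_i|$, absorbed into the parameters) is exactly $\rho\bigl(\bigotimes_p A_{pr} b_i\bigr)$, $\rho$ the Reynolds operator. Since $A_{pr}$ is $G$-equivariant and $b_i$ is $G_i$-fixed, $A_{pr} b_i \in V_p^{G_i}$, so $\bigotimes_p A_{pr} b_i \in C_i$ — and conversely every pure tensor in $\bigotimes_p V_p^{G_i}$ arises this way, because a $G_i$-fixed vector $v_p \in V_p^{G_i}$ can be extended to a $G$-equivariant map $V_r \to V_p$ sending $b_i \mapsto v_p$ and $g b_i \mapsto g v_p$ (well-defined on the orbit $Gb_i$ by the $G_i$-invariance of $v_p$, and extended $G$-equivariantly and then arbitrarily — say by zero — off $\mathrm{span}(Gb_i)$, which is a $G$-submodule with a $G$-stable complement by complete reducibility).

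Putting this together: as $A$ ranges over $\rep_G(T)$, $\Psi_T(A)$ ranges over exactly the set $\{ \rho(c_1) + \cdots + \rho(c_s) \mid c_i \in C_i \}$ — the set-theoretic sum $\rho(C_1) + \cdots + \rho(C_s)$. Taking Zariski closures, $\CVEM(T) = \overline{\rho(C_1) + \cdots + \rho(C_s)}$. To finish I would identify this closure with the join $\overline{\rho(C_1)} + \cdots + \overline{\rho(C_s)}$: each $C_i$ is a cone (it is the affine cone over the Segre-type variety of pure tensors), so $\rho(C_i)$ is a cone and $\overline{\rho(C_i)}$ is an irreducible closed cone; the join of irreducible cones $X_1, \dots, X_s$ is by definition $\overline{\{x_1 + \cdots + x_s \mid x_i \in X_i\}}$, and a standard argument shows this equals $\overline{\{x_1 + \cdots + x_s \mid x_i \in \rho(C_i)\}} = \overline{\rho(C_1) + \cdots + \rho(C_s)}$ because $\rho(C_i)$ is dense in $\overline{\rho(C_i)}$ and addition is continuous (indeed a morphism of varieties), so the image of the dense set $\rho(C_1) \times \cdots \times \rho(C_s)$ under the addition map has the same closure as the image of $\overline{\rho(C_1)} \times \cdots \times \overline{\rho(C_s)}$.

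The step I expect to be the main obstacle — or at least the one needing the most care — is the surjectivity direction: verifying that every pure tensor $\bigotimes_p v_p$ with $v_p \in V_p^{G_i}$ is realised as $\bigotimes_p A_{pr} b_i$ for genuine $G$-equivariant $A_{pr}$. The point to get right is the construction of $A_{pr} \in \Hom_G(V_r, V_p)$ with prescribed value $v_p$ at $b_i$: one defines it on the $G$-submodule $\mathrm{span}_K(Gb_i) \subseteq V_r$ by $g b_i \mapsto g v_p$, which is well-defined precisely because $v_p$ is $G_i$-fixed and $\mathrm{span}(Gb_i)$ is a quotient of the permutation module $K[G/G_i]$; then one extends by zero on a $G$-stable complement, available since $G$ is finite and $\mathrm{char}\, K = 0$ (Maschke). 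Care is also needed with the multiplicative constants $|G_i|$ that appear when relating $\sum_{g \in G/G_i} g(\cdot)$ to $\rho(\cdot)$, but since $C_i$ is a cone these scalars are harmless and can simply be folded into the choice of $v_p$. Everything else — the orbit decomposition of $\Psi_T(A)$ and the identification of closures with joins — is routine.
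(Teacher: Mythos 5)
Your proof is correct and follows essentially the same route as the paper's: expand $\Psi_T(A)=\sum_{b\in B_r}\bigotimes_p A_{pr}b$, group by $G$-orbits, pull out the Reynolds operator (with the harmless scalar $[G:G_i]$, not $|G_i|$), and observe $A_{pr}b_i\in V_p^{G_i}$. You flesh out the reverse inclusion, which the paper dismisses with ``this argument can be reversed,'' by building $A_{pr}\in\Hom_G(V_r,V_p)$ with a prescribed value at $b_i$; the only point left implicit is that since the orbits $Gb_1,\dots,Gb_s$ span pairwise disjoint $G$-submodules of $V_r$ (as $B_r$ is a basis), these prescriptions can be imposed simultaneously for all $i$, giving a single $A$ realising any tuple $(c_1,\dots,c_s)$.
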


\begin{proof}
Elements in an open dense subset of $\CVEM(T)$ look like
\begin{align*}
\sum_{b \in B_r} \bigotimes_{p \in \leaf(T)} A_{pr} b
&=\sum_{i=1}^s \sum_{g \in G/G_i} \bigotimes_{p \in
\leaf(T)} A_{pr} gb_i\\
&=\sum_{i=1}^s \sum_{g \in G/G_i} g\left(\bigotimes_{p \in
\leaf(T)} A_{pr} b_i\right)\\
&=\sum_{i=1}^s |G/G_i| \rho\left(\bigotimes_{p \in \leaf(T)}
A_{pr} b_i\right)\\
&=\sum_{i=1}^s \rho\left(|G/G_i| \bigotimes_{p \in \leaf(T)}
v_{i,p}\right),
\end{align*}
where $v_{ip}=A_{pr} b_i \in V_p^{G_i}$; the latter element clearly lies
in the join $\overline{\rho(C_1)}+\ldots+\overline{\rho(C_s)}$. This
argument can be reversed to show the opposite inclusion.
\end{proof}
This is particularly interesting in the case when for all internal vertices $q$, $B_q$ is a single $G$-orbit. As usual, we may assume that $T$ is a star, and then $\CVEM(T)$ is simply $\overline{\rho(C)}$ where $C$ is the set of pure tensors in $\bigotimes_{p \in \leaf(T)} (V_p^{H})$ with $H = G_b$ the stabiliser of some element $b \in B_q$. $\rho$ being a linear projection now means that the ideal may be computed by elimination theory, at least in principle. This applies to (1), (2), and (6) in Example~\ref{ex:Models}.

In the following section we record some further observations for abelian
groups $G$.

\section{Abelian groups and toricness} \label{sec:Toric}

In this section we collect some results on the equivariant model for an
abelian group $G$. The fact that all irreducible representations of $G$
are one-dimensional makes $G$-equivariant models somewhat easier to
analyse than general equivariant models. Recall that an element $v$
in a $G$-representation is called a weight vector if it is a common
eigenvector of all elements of $G$; in that case the function $\lambda:G
\rightarrow K^*$ determined by $gv=\lambda(g)v$ is a character of $G$. We
also say that $G$ scales $v$ by $\lambda$. The following results are a
slight generalisation of results in
\cite{Evans93,Sturmfels05b}.

\begin{prop} \label{prop:Star}
Suppose that $T$ is a star with (based) centre $r$, that $G$ is abelian
and that $B_r$ is a single $G$-orbit. Then $\CVEM(T)$ is a toric variety.
More specifically, there exist tori $S_p$ in $\lieg{GL}(V_p)$ for $p \in
\leaf(T)$, diagonalised by certain bases of $G$-weight vectors such that
$\prod_{p \in \leaf(T)} S_p$ stabilises $\CVEM(T)$ with a dense orbit.
\end{prop}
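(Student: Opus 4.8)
The plan is to reduce, as usual, to the case where $T$ is a star with centre $r$, and then exploit the hypothesis that $B_r$ is a single $G$-orbit together with Proposition~\ref{prop:GeneralStar}. Since $B_r$ is a single orbit, that proposition gives $\CVEM(T)=\overline{\rho(C)}$, where $C$ is the cone of pure tensors $\bigotimes_{p\in\leaf(T)}v_p$ with $v_p\in V_p^H$ and $H=G_b$ is the stabiliser of a chosen $b\in B_r$. Because $G$ is abelian, $H$ is normal and $G/H$ is abelian; moreover every irreducible $G$-module is one-dimensional, so each $V_p$ decomposes into a direct sum of $G$-weight lines. Fix for each leaf $p$ a basis $\mathcal{B}_p$ of $V_p$ consisting of $G$-weight vectors, chosen so that $\mathcal{B}_p\cap V_p^H$ is a basis of $V_p^H$; let $S_p\subseteq\lieg{GL}(V_p)$ be the diagonal torus with respect to $\mathcal{B}_p$.

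The key observation is that $\rho$ intertwines nicely with this torus action. First, I would show that $\rho(\bigotimes_p v_p)$, for $v_p\in V_p^H$ a weight vector of weight $\lambda_p$, is nonzero precisely when $\prod_p\lambda_p$ restricts trivially to $\ldots$ well, more simply: $\rho$ of a $G$-weight vector in $L(T)$ is the vector itself if its weight is trivial and $0$ otherwise, because $\rho$ is the projection onto the trivial isotypic component with kernel the sum of the nontrivial ones. Hence $\rho(C)$ is spanned (as a cone) by those pure tensors $\bigotimes_p v_p$ of weight-vectors $v_p\in\mathcal{B}_p\cap V_p^H$ whose weights multiply to the trivial character; call such a tuple \emph{balanced}. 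Now $\prod_p S_p$ acts on $L(T)$ diagonally in the monomial basis $\{\bigotimes_p v_p\}$, it preserves the span of the balanced monomials (since its action does not change which weight-line a tensor factor lies on), and it acts on $\CVEM(T)=\overline{\rho(C)}$ — here one uses that $\rho$ commutes with $\prod_p S_p$ because each $S_p$ centralises $G$ (as $G$ is abelian, all its elements are diagonal in $\mathcal{B}_p$, hence commute with $S_p$). So $\CVEM(T)$ is stable under $\prod_p S_p$, and since it is (the closure of) a cone spanned by monomials that are the $\prod_p S_p$-weight vectors, it is a toric variety in the classical sense. It remains to produce a dense orbit: the image under the parameterisation of a generic pure tensor $\bigotimes_p v_p$ with each $v_p=\sum_{w\in\mathcal{B}_p\cap V_p^H}w$ (the all-ones vector in the $H$-invariants), acted on by $\prod_p S_p$, sweeps out a dense subset of $\overline{\rho(C)}$ — this is the standard fact that a torus acting on an affine space with a full set of distinct characters on the relevant coordinate subspace has a dense orbit on the closure of the orbit of the all-ones point, and $\CVEM(T)$ is exactly that closure up to checking that the balanced monomials already appear.

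Concretely the steps are: (1) reduce to $T$ a star via the previous theorems; (2) invoke Proposition~\ref{prop:GeneralStar} in the single-orbit case to get $\CVEM(T)=\overline{\rho(C)}$; (3) choose weight-vector bases $\mathcal{B}_p$ adapted to $V_p^H$, define $S_p$, and check each $S_p$ centralises $G$ so $\rho$ is $\prod_pS_p$-equivariant; (4) identify $\rho(C)$ with the cone over the balanced monomials and conclude $\prod_pS_p$-stability; (5) exhibit the orbit of the all-ones pure tensor as dense. The main obstacle I anticipate is step~(5): verifying that the single $\prod_p S_p$-orbit of one well-chosen point is Zariski dense in $\overline{\rho(C)}$ rather than merely in a subvariety. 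This amounts to showing that the parameterising map $C\to L(T)$ followed by $\rho$ has image already inside the closure of one torus orbit, i.e.\ that varying the parameters $v_p$ over all of $V_p^H$ gives nothing beyond scaling the monomial coordinates independently subject to the balancing constraint; concretely one checks that the monomials appearing in $\rho(\bigotimes_p v_p)$ are exactly the balanced ones and that their coefficients, as the $v_p$ vary, realise a dense subset of the coordinate torus modulo the subtorus fixing the all-ones point. This is a finite, if slightly fiddly, character computation; everything else is formal.
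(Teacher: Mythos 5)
Your proof is correct and follows essentially the same route as the paper's: both parameterise $\CVEM(T)$ by tuples $(v_p)_p \in \prod_p V_p^{G_b}$ (the paper writes the parameterisation directly as $\sum_{g\in G/G_b}g(\bigotimes_p v_p)$, you invoke Proposition~\ref{prop:GeneralStar} to write it as $\rho(\bigotimes_p v_p)$ -- same thing up to a scalar), observe that each $V_p^{G_b}$ is $G$-stable because $G$ is abelian, pick $S_p$ diagonal with respect to a $G$-weight basis adapted to $V_p^{G_b}$, and use that $S_p$ centralises $G$.

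Two small remarks. First, your step (1) is vacuous: the proposition already assumes $T$ is a star; the reduction from general trees to stars is the content of Theorem~\ref{thm:Toric}, not of this proposition. Second, your worry about step (5) is unfounded -- no ``fiddly character computation'' is required. Once you have established in step (3) that the parameterising map $\phi\colon\prod_p V_p^{G_b}\to L(T)$, $(v_p)_p\mapsto\rho(\bigotimes_p v_p)$, is $\prod_p S_p$-equivariant, density of a single orbit is automatic: the source $\prod_p V_p^{G_b}$ carries a dense $\prod_p S_p$-orbit (the complement of the coordinate hyperplanes), and a continuous equivariant map sends a dense orbit to a single orbit that is dense in the closure of the image. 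That is exactly how the paper dispatches the dense-orbit claim in one line: ``as each $S_p$ has a dense orbit on $V_p'$, $S$ has a dense orbit on $\CVEM(T)$.'' The character computation you anticipate would only be needed to describe the defining binomial ideal explicitly, not to prove toricness.
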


\begin{proof}
Fix $b \in B_r$. A typical element of $\CVEM(T)$ looks like
\[ \sum_{g \in G/G_b} g \left (\bigotimes_{p \in \leaf(T)} v_p\right ) \]
with $v_p \in V_p^{G_b}=:V_p'$. As $G$ is abelian, $V_p'$ is a
$G$-module. Choose any basis of $V_p'$ that diagonalises $G$ and let $S_p$
be the associated torus in $\lieg{GL}(V_p')$, regarded as a torus in
$\lieg{GL}(V_p)$ acting trivially on a $G$-stable complement of $V_p'$
in $V_p$. Set $S:=\prod_{p \in \leaf(T)} S_p$. Then $(s_p)_p \in S$
sends the element above to
\[ \sum_{g \in G/G_b} g \left(\bigotimes_p s_p v_p\right), \]
which again lies in $\CVEM(T)$. Moreover, as each $S_p$ has a dense
orbit on $V_p'$, $S$ has a dense orbit on $\CVEM(T)$.
\end{proof}

\begin{re}
If $G$ is abelian and has $k$ orbits on $V_p$, then
$\CVEM(T)$ for a star $T$ is a join of $k$ toric varieties.
This fact is exploited in \cite{Casanellas05} for the strand-symmetric
model, where $k=2$.
\end{re}

\begin{thm} \label{thm:Toric}
Suppose that $G$ is an abelian group and that $T$ is a $G$-tree in which
$G$ has a single orbit on all $B_p$ with $p \in \internal(T)$. Then
$\CVEM(T)$ is a toric variety. More precisely, there exists a torus $S$
acting linearly on $L(T)$, stabilising $\CVEM(T)$ with a dense orbit,
and diagonalised by a tensor product of $G$-weight bases of the $V_p,
p \in \leaf(T)$.
\end{thm}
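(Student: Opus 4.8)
The plan is to reduce the general tree case to the star case, which is Proposition~\ref{prop:Star}, by an induction on the structure of the tree that mirrors exactly the two reduction steps already used in the proofs of Theorems~\ref{thm:Procedure} and~\ref{thm:Flattening}. The base case is a star with internal centre $r$: since $G$ has a single orbit on $B_r$, Proposition~\ref{prop:Star} gives a torus $S = \prod_{p \in \leaf(T)} S_p$, diagonalised by $G$-weight bases of the $V_p$, stabilising $\CVEM(T)$ with a dense orbit. (If $T$ is a single edge, $\CVEM(T) = L(T)^G$ is a coordinate subspace in a $G$-weight basis, hence toric under the full diagonal torus acting on it.)

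First I would handle the case where $T$ has an internal vertex $q$ of valency $2$: split $T = T_1 * T_2$ at $q$, so that $\CVEM(T) = V \cdot W$ is the matrix product of $V := \CVEM(T_1)$ and $W := \CVEM(T_2)$ under the identifications $\Hom_G(L_2,V_q)$ and $\Hom_G(V_q,L_1)$ from the proof of Theorem~\ref{thm:Procedure}. By induction there are tori $S_1, S_2$ acting with dense orbits on $\CVEM(T_1), \CVEM(T_2)$, diagonalised by $G$-weight bases of the relevant leaf spaces; in particular, after fixing a $G$-weight basis of $V_q$ (which exists because $G$ is abelian, so $V_q$ decomposes into weight lines), the matrix multiplication map is equivariant for the product torus, and the image of a dense orbit under a dominant morphism of tori-with-action is dense. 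Concretely, the subtorus of $S_1 \times S_2$ that acts on the shared variables at $q$ compatibly descends to a torus acting on $L(T)$ with a dense orbit on $\CVEM(T)$; since both $S_1$ and $S_2$ are diagonal in $G$-weight bases at the leaves of $T$, so is the resulting torus.

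Next I would treat the remaining case, where $T$ is neither a star nor has a valency-$2$ vertex: pick an edge $p \sim r$ between two internal vertices of valency $\geq 3$ and, exactly as in the proof of Theorem~\ref{thm:Procedure}, insert two vertices $q_1, q_2$ with $V_{q_1} := V_r$, $V_{q_2} := V_p$, obtaining $T'$ with $\CVEM(T') = \CVEM(T)$. The new internal vertices $q_1, q_2$ inherit the bases $B_r, B_p$, which are single $G$-orbits by hypothesis on $T$, so $T'$ still satisfies the hypothesis of the theorem, and $T'$ now has valency-$2$ vertices, so the previous step applies to $T'$ and transfers back to $T$ verbatim. This closes the induction, since the measure used in the proof of Theorem~\ref{thm:Procedure} (total number of substars with $\geq 3$ leaves in the pieces after splitting) strictly decreases.

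The main obstacle I anticipate is bookkeeping rather than mathematics: making precise how the tori from the two branches $T_1, T_2$ glue across the shared vertex $q$ to yield a single torus acting on $L(T)$, and verifying that density of the orbit is preserved. The cleanest way to do this is to observe that $\Psi_T$ is a polynomial (in fact multilinear) map and that $\CVEM(T)$ is by definition the closure of the image of $\rep_G(T)$; the torus $S_p$ at a leaf $p$ acts on $\rep_G(T)$ through $\GL(V_p)^G$ (as in Lemma~\ref{lm:GTAction}), and one checks directly that this action of $\prod_p S_p$ on the parameters, composed with $\Psi_T$, agrees with a linear action on $L(T)$ diagonalised in the tensor-product weight basis --- this is the content of Lemma~\ref{lm:GTAction} combined with the fact that $S_p \subseteq \GL(V_p)^G$ centralises $G$ when $G$ is abelian and $S_p$ is a weight torus. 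Density of the combined orbit then follows because a generic parameter choice already produces, branch by branch, a point in the dense orbit, and the product map is dominant.
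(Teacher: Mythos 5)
Your inductive skeleton---stars via Proposition~\ref{prop:Star}, splitting at a valency-$2$ vertex $q$ into $T = T_1 * T_2$, inserting vertices as in the proof of Theorem~\ref{thm:Procedure}---is the same as the paper's, but the decisive step of turning the inductively-given tori $S_1 \subseteq \GL(L(T_1))$ and $S_2 \subseteq \GL(L(T_2))$ into a torus on $L(T) = L_1 \otimes L_2$ is where all the content lies, and your resolution of it is flawed. Your ``cleanest way'' paragraph asserts the torus may be taken as $\prod_{p\in\leaf(T)} S_p$ with each $S_p\subseteq\GL(V_p)^G$, acting through the parameters via Lemma~\ref{lm:GTAction}. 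But the paper explicitly warns, immediately after the theorem statement, that such a torus can in general \emph{not} be found inside $\prod_{p\in\leaf(T)} Z_{\GL(V_p)}G$; on the tree of Example~\ref{ex:Toric} ($G=\ZZ_2$, all $V_p = KG$, six leaves) one has $\dim\CVEM(T) = 8$, while the image of $\prod_{p\in\leaf(T)}\GL(V_p)^G$ in $\GL(L(T))$ has dimension at most $2\cdot 6 - 5 = 7$, so this leaf torus never has a dense orbit. The torus the theorem produces is \emph{diagonalised} by a tensor-product weight basis of the leaf spaces, but its characters on such a basis vector do not factor leaf by leaf.

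The idea you are missing is the multiplicity-one structure of $V_q$. Since $B_q$ is a single $G$-orbit, $V_q$ is a multiplicity-free permutation module, so the $G$-weight bases $D_1$, $D_2$ of $V_q$ used to diagonalise $S_1$ on $L_1\otimes V_q$ and $S_2$ on $V_q\otimes L_2$ coincide after scaling, say $D_1 = D_2 = D$. One then \emph{re-indexes} each $S_i$-action from $L(T_i) = L_i\otimes V_q$ to $L_i$ alone: for $c\in C_i$ (the weight basis of $L_i$) of weight $\lambda$, let $S_i$ scale $c$ by the character it used on $d\otimes c\in L(T_i)$, where $d\in D$ is the unique basis vector of weight $-\lambda$, and let $S_i$ fix $c$ if $-\lambda$ is not a weight of $V_q$ (harmless, since those $c$-components of $\CVEM(T)$ vanish). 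Multiplicity one is exactly what makes this re-indexing well-defined. The \emph{full} product $S := S_1\times S_2$---not a ``compatible subtorus'' as your second paragraph proposes, which would be too small to retain density---then acts diagonally on $L_1\otimes L_2$ in the basis $C_1\otimes C_2$, and the contraction map $(\Psi_1,\Psi_2)\mapsto \sum_{b\in B_q}(\Psi_1\mid b)\otimes(\Psi_2\mid b)$ is $(S_1\times S_2)$-equivariant for the original actions on the sources and this redefined action on the target; that equivariance together with inductive density of the source orbits yields the dense $S$-orbit on $\CVEM(T)$.
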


There is a subtlety here: unlike in Proposition \ref{prop:Star} such
a torus can in general {\em not} be found in $\prod_{p \in \leaf(T)}
Z_{\lieg{GL}(V_p)} G$.

\begin{proof}
We proceed by induction. First, if $T$ is the single edge $pq$,
then $\CVEM(T) = (V_p \otimes V_q)^G = \bigoplus_{\lambda + \mu = 0}
V_p[\lambda] \otimes V_q[\mu]$ where $(\lambda,\mu)$ ranges over
pairs of characters of $G$. Let $S$ be a maximal torus in $\lieg{GL}((V_p
\otimes V_q)^G)$ which is diagonal with respect to a basis of pure
tensors $v_\lambda \otimes v_{-\lambda}$ with $v_\lambda \in V_p$ and
$v_{-\lambda} \in V_q$ weight vectors of weights $\lambda,-\lambda$. View
$S$ as a torus in $\lieg{GL}(V_p \otimes V_q)$ acting trivially on all
$V_p[\lambda] \otimes V_q[\mu]$ with $\lambda+\mu \neq 0$. This $S$
has the properties claimed in the theorem.

Second, if $T$ is a star, then the proposition above does the trick.
Third, if $T$ is neither a star nor an edge, then let $q \in \internal(T)$
be any internal vertex of valency two. As in the proof of Theorem
\ref{thm:Procedure} we may add such a vertex, if necessary, without
changing $\CVEM(T)$---and in fact, if $q$ is inserted between the
internal vertices $p,r$, then, as $B_p,B_r$ are $G$-orbits, $V_q:=KG$ is
sufficiently large.  Write $T=T_1 * T_2$ at $q$, let $L_i := \bigoplus_{p
\in \leaf(T_i) \setminus q} V_p$, and let $S_1,S_2$ be the tori whose existence
is claimed by the theorem for the $G$-trees $T_i$. In particular, $S_i$
is diagonalised by the tensor product of a $G$-weight basis $C_i$ of $L_i$
and a $G$-weight basis $D_i$ of $V_q$ (such a basis is always orthogonal and may be chosen orthonormal). But since $V_q$ is a permutation
module with a single orbit, every weight occurs at most once in $V_q$,
so that (after scaling) $D_1=D_2=:D$. Now we let $S_i$ act on $L_i$
as follows: let $c_i \in C_i$ have $G$-weight $\lambda$. If $-\lambda$
is not a weight in $V_q$, then $S_i c_i:=c_i$. If, on the other hand,
$-\lambda$ is a weight in $V_q$, then it is the weight of a unique $d
\in D$, and we let $S_i$ scale $c_i$ by the character with which it
scales $d \otimes c_i \in L(T_i)$. Now $S:=S_1 \times S_2$ acts on $L_1
\otimes L_2 = L(T)$, and is diagonalised by the tensor product of $C_1$
and $C_2$.

Finally we verify that $S$ stabilises $\CVEM(T)$ with a dense
orbit.  To see this, observe that the map $(V_q \otimes L_1)^G \times
(V_q \otimes L_2)^G \to (L_1 \otimes L_2)^G$ given by $(\Psi_1, \Psi_2)
\mapsto \sum_{b \in B_q} (\Psi_1 \mid b) \otimes (\Psi_2 \mid b)$ is $S_1
\times S_2$-equivariant: it sends $(d_1 \otimes c_1,d_2 \otimes c_2)$,
where $d_i$ and $c_i$ have opposite weight, to $(d_1 \mid d_2)(c_1
\otimes c_2)$, which scales with the same $S$-characters by definition
of the action of $S$. Since $S_i$ has a dense orbit on
$\CVEM(T_i)$, $S$ has a dense orbit on $\CVEM(T)$.
\end{proof}

Theorem \ref{thm:Toric} reduces the computation of the ideals of certain
equivariant models to the combinatorics of toric varieties (where we do not require toric varieties to be normal).  However, this combinatorics can be very intricate, and it requires great ingenuity
to find explicit generators as in \cite{Sturmfels05b}.
We conclude with an example.

\begin{ex} \label{ex:Toric}
First let $T$ be a star with centre $q$ and four leaves
$p_1,\ldots,p_4$. Let $G =\ZZ_2 = \{1,x\}$ and $V_p = KG$ for all vertices
$p \in T$, with basis $G$ and induced form. For this $G$ and $V_p$,
a star with four leaves is the smallest $G$-star for which $\CVGMG(T)
\subsetneq L(T)^G$.

We are free to choose any basis on $L(T)$, so we pick the orthonormal
product basis of the basis of $KG$ diagonalising $G$. Let us denote
this basis by $(t,s)$ where $t=1+x$ spans the trivial, and $s=1-x$
the sign representation in $KG$.  We will label the basis of $L(T)$
given by the pure tensors $b_1 \otimes b_2 \otimes b_3 \otimes b_4$
($b_i \in \{ t,s\}$) as follows: for a subset $I \subseteq \{1,2,3,4\}$
let $b_I = b_1 \otimes b_2 \otimes b_3 \otimes b_4$ where $b_i= s$ if
$i \in I$, and $b_i = t$ otherwise.  Then $L(T)^G$ is spanned by all $b_I$
such that $\vert I \vert$ is even.

As $\Hom_G(KG,KG) \cong K^2$, an equivariant representation of $T$
is specified by $8$ parameters $y_i,x_i$ ($i = 1,2,3,4$) where $x_i$
is dual to $s$ and $y_i$ is dual to $t$ in $KG$, and $A_{p_iq} = (x_is +
y_it) \otimes 1 + (y_it - x_is) \otimes x$ (where $p_1,p_2,p_3,p_4$ are
the leaves).  By Proposition~\ref{prop:GeneralStar}, $\CVGMG(T) = \rho(P)$
where $P$ is the variety of pure tensors in $L(T)$. Specifically, if $A =
(x_i,y_i)_{i=1}^{4}$ is a representation, then
\[ \Psi_T(A) = 2\rho\Bigl(\bigotimes_i (x_is + y_it)\Bigr). \]
Let the variables dual to the basis $(b_I)_{ I \subseteq \{ 1,2,3,4\}}$
be denoted by $x_I$.  Then $x_I(\Psi_T(A)) = 0$ if $\vert I \vert$
is odd, and
\[ x_I(\Psi_T(A)) = \Biggl (\prod_{i\in I}x_i \Biggr ) \Biggl (\prod_{i \not \in I} y_i \Biggr ), \]
if $\vert I \vert$ is even. Thinking of $\CVGMG(T)$ as a subvariety of
$L(T)^G$, for determining the ideal we need to consider only functions in
the $x_I$ with $\vert I \vert$ even.  There are some obvious relations,
namely, if $I,J$ are two subsets of $\{1,2,3,4\}$ with even number of
elements and with complements $I^c$, $J^c$, respectively, then
\[ f_{I,J} := x_{I}x_{I^c} - x_Jx_{J^c} \]
is in the ideal of $\CVGMG(T)$.  Note that it is enough to consider
$f_{I,J}$ where every subset appears once as $I$ or $J$, because $f_{I,J}
+ f_{J,J'} = f_{I,J'}$, and if $\vert I\vert = \vert J \vert = 2$, then
$f_{I,J}$ is nonzero only if $I \cap J$ contains one element. Altogether
it follows that the ideal generated by the $f_{I,J}$'s is already
generated by
\[ f_{\emptyset, \{ 1,2\}},  f_{\emptyset,\{1,3\}}, f_{\emptyset,\{1,4\}}.\]
One can show that $I(\CVGMG(T))$ is generated by the $f_{I,J}$'s. Indeed,
by \cite[Theorem 3.1]{Diaconis98} this boils down to showing that these
$f_{I,J}$'s correspond to a Markov basis for the module of $\ZZ$-linear
relations among the $8$ vectors $(\ba,\bb) \in \{0,1\}^4 \times \{0,1\}^4
\subseteq \NN^4 \times \NN^4$ where $|\ba|:=\sum_i a_i$ is even and
$\bb=\one-\ba$ with $\one=(1,1,1,1)$. We omit the combinatorial details
here.

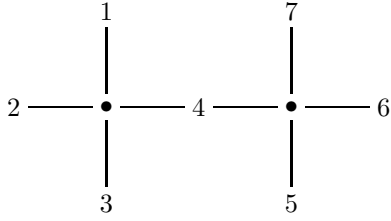
\begin{figure}[h]
\[
\xymatrix{ & 1 \ar@{-}[d]& & 7 \ar@{-}[d]& \\
           2 \ar@{-}[r]& \bullet \ar@{-}[r]& 4 \ar@{-}[r] & \bullet \ar@{-}[r]& 6 \\
           & 3\ar@{-}[u] & & 5 \ar@{-}[u] & }
\]
\caption{\label{fig:Tree}The spaced tree $T$.}
\end{figure}

To illustrate Theorem~\ref{thm:Procedure}, we now consider a $G$-tree $T$
obtained by gluing together two stars as above at one common leaf (see
Figure~\ref{fig:Tree}). Notice that by Remark~\ref{re:ValencyTwoC} and Lemma~\ref{le:PushFwd} $\CVGMG(T) = \CVGMG(T')$ where $T'$
is the tree $T$ with vertex $4$ removed and the centres of the two stars
in $T$ joined by an edge.

Algorithm \ref{alg:Procedure} first identifies a vertex of valency $2$;
here vertex $4$. We then write $T = T_1*T_2$, with $T_1$ the left and
$T_2$ the right star with four leaves each. By the above we know the
ideals of $\CVGMG(T_i)$. The content of Theorem~\ref{thm:Procedure}
in this situation is that
\[ I(\CVGMG(T)) = I(\CVGMG(T_1)*L(T_2)^G) + I(L(T_1)^G*\CVGMG(T_2)). \]
Because of the symmetry of the problem, we only consider the first
summand.  We keep the notation introduced above with respect to $T_i$:
the variables on $L(T_1)^G$ will be $x_I$ ($I \subseteq \{ 1,2,3,4\}$,
$\vert I\vert$ even) and those on $L(T_2)^G$ will be $y_I$ ($I \subseteq
\{ 4,5,6,7 \}$, $\vert I \vert$ even). Finally the variables on $L(T)^G$
will be $z_J$ where $J \subseteq \{ 1,2,3,5,6,7 \}$  has an even number
of elements; $J$ corresponds to the basis vector $b_J = b_1 \otimes
b_2 \otimes b_3 \otimes b_5 \otimes b_6 \otimes b_7$ where $b_i = s$
if $i \in J$ and $b_i = t$ otherwise. We also adopt the convention that
$x_I,y_I,z_I = 0$ if $\vert I \vert$ is odd.

The ideal of $\CVGMG(T_1)*L(T_2)^G$ is generated by $I':=I(\CVEM(T_1)')$
and certain $2\times 2$-minors, since both representations of $\ZZ_2$
occur with multiplicity one in $V_4$. These minors are of the form
\[ z_{I_1 \cup I_2} z_{I'_1 \cup I'_2}-z_{I_1\cup I'_2}
z_{I_1' \cup I_2} \]
where $I_1,I'_1 \subseteq \{1,2,3\},I_2,I'_2 \subseteq \{5,6,7\}$ are
all distinct and either all even or all odd. Next we show how to find
generators of $I'$. The space $L(T_2)^G$ is isomorphic to $\Hom_G(V_4,
\bigotimes_{ p \in \leaf(T_2) \setminus \{ 4\}}) \cong M_{\bm,\bl}$
with $\bm=(4,4)$ and $\bl=(1,1)$. Similarly,
\[ L(T)^G \cong \Hom_G\Biggl(\bigotimes_{p \in \leaf(T_2)
\setminus \{ 4\}}V_p,
\bigotimes_{p \in \leaf(T_1) \setminus \{ 4 \} }V_p\Biggr) \cong M_{\bk,\bm}\]
with $\bk=(4,4)$ and $\bm$ as above.

Let $\Psi_0 \in L(T_2)^G$ be arbitrary and let $\Psi \in L(T)^G$. Then
$\Psi \Psi_0 \in \Hom_G(V_4,\bigotimes_{p \in \leaf(T_1)\setminus \{
4\}}V_p) = L(T_1)^G$ and a straightforward computation shows that
\[ x_{I}(\Psi \Psi_0) =
\begin{cases}
	\sum_{J \subseteq \{5,6,7\} }
	z_{J \cup I}(\Psi)y_{J}(\Psi_0) &
		\text{ if $4 \not \in I$, and } \\
	\sum_{J \subseteq \{5,6,7\} }
	z_{J \cup I\setminus \{4\}}(\Psi)y_{J \cup \{ 4\}}(\Psi_0) &
\text{ if } 4 \in I.
\end{cases} \]
To avoid clumsy notation, let us write $\tilde x_I$ for the function $x_I(\Psi \Psi_0)$ in the arguments $(\Psi,\Psi_0)$.
Then for example
\begin{align}\label{eq:relation1}\tilde x_{\emptyset} = & z_{\emptyset}y_{\emptyset} + z_{\{5,6\}}y_{\{5,6\}} + z_{\{5,7\}}y_{\{5,7\}} + z_{ \{ 6,7\}}y_{\{6,7\}}\\
 \tilde x_{\{1,2,3,4\}} = & z_{\{1,2,3,5\}}y_{\{4,5\}} + z_{\{1,2,3,6\}}y_{\{4,6\}} + z_{\{1,2,3,7\}}y_{\{4,7\}}\\*
 \notag &+ z_{ \{ 1,2,3,5,6,7\}}y_{\{4,5,6,7\}}\\
 \tilde x_{\{1,2\} } = & z_{\{1,2\}}y_\emptyset + z_{\{ 1,2,5,6 \}}y_{\{5,6\}} + z_{\{1,2,5,7\}}y_{\{5,7\}} + z_{\{1,2,6,7\}}y_{\{6,7\}}\\
 \intertext{and finally}
\label{eq:relation4} \tilde x_{\{3,4\}} = & z_{\{3,5\}}y_{\{4,5\}} + z_{\{3,6\}}y_{\{4,6\}} + z_{\{3,7\}}y_{\{4,7\}}
 +z_{\{3,5,6,7\}}y_{\{4,5,6,7\}}.
 \end{align}
 $I'$ is then generated by all $f_{\Psi_0}'$ where $f$ is in the ideal of $\CVGMG(T_1)$. As observed before, this is the same as the ideal generated by all coefficients of monomials in the $y_I$'s. As an example let us consider $f_{\emptyset,\{1,2,\}} = x_{\emptyset}x_{\{1,2,3,4\}} - x_{\{1,2\}}x_{\{3,4\}}$. Using the relations \eqref{eq:relation1}--\eqref{eq:relation4}, we get an expression in the $z_I$'s and $y_I$'s.
 As a function on $M_{\bk,\bm} \times M_{\bm,\bl}$ it is equal to
\begin{multline}
 f_{\emptyset,\{1,2\}}(\Psi\Psi_0) = \tilde x_{\emptyset}\tilde x_{\{1,2,3,4\}} - \tilde x_{\{1,2\}} \tilde x_{\{3,4\}} \\ = (z_{\emptyset}z_{\{1, 2, 3, 5\}}-z_{\{1, 2\}}z_{\{3, 5\}})y_{\emptyset}y_{\{4, 5\}}\\
 +(z_{\emptyset}z_{\{1, 2, 3, 6\}}-z_{\{1, 2\}}z_{\{3, 6\}})y_{\emptyset}y_{\{4, 6\}}\\
 +(z_{\emptyset}z_{\{1, 2, 3, 7\}}-z_{\{1, 2\}}z_{\{3, 7\}})y_{\emptyset}y_{\{4, 7\}}\\
  +(z_{\emptyset}z_{\{1, 2, 3, 5, 6, 7\}}-z_{\{1, 2\}}z_{\{3, 5, 6, 7\}})y_{\emptyset}y_{\{4, 5, 6, 7\}}\\
 +(z_{\{5, 6\}}z_{\{1, 2, 3, 5\}}-z_{\{1, 2, 5, 6\}}z_{\{3, 5\}})y_{\{4, 5\}}y_{\{5, 6\}}\\
 +(z_{\{5, 7\}}z_{\{1, 2, 3, 5\}}-z_{\{1, 2, 5, 7\}}z_{\{3, 5\}})y_{\{4, 5\}}y_{\{5, 7\}}\\
 +(z_{\{6, 7\}}z_{\{1, 2, 3, 5\}}-z_{\{1, 2, 6, 7\}}z_{\{3, 5\}})y_{\{4, 5\}}y_{\{6, 7\}}\\
 +(z_{\{5, 6\}}z_{\{1, 2, 3, 6\}}-z_{\{1, 2, 5, 6\}}z_{\{3, 6\}})y_{\{4, 6\}}y_{\{5, 6\}}\\
 +(z_{\{5, 7\}}z_{\{1, 2, 3, 6\}}-z_{\{1, 2, 5, 7\}}z_{\{3, 6\}})y_{\{4, 6\}}y_{\{5, 7\}}\\
 +(z_{\{6, 7\}}z_{\{1, 2, 3, 6\}}-z_{\{1, 2, 6, 7\}}z_{\{3, 6\}})y_{\{4, 6\}}y_{\{6, 7\}}\\
 +(z_{\{5, 6\}}z_{\{1, 2, 3, 7\}}-z_{\{1, 2, 5, 6\}}z_{\{3, 7\}})y_{\{4, 7\}}y_{\{5, 6\}}\\
 +(z_{\{5, 7\}}z_{\{1, 2, 3, 7\}}-z_{\{1, 2, 5, 7\}}z_{\{3, 7\}})y_{\{4, 7\}}y_{\{5, 7\}}\\
 +(z_{\{6, 7\}}z_{\{1, 2, 3, 7\}}-z_{\{1, 2, 6, 7\}}z_{\{3, 7\}})y_{\{4, 7\}}y_{\{6, 7\}}\\
 +(z_{\{5, 6\}}z_{\{1, 2, 3, 5, 6, 7\}}-z_{\{1, 2, 5, 6\}}z_{\{3, 5, 6, 7\}})y_{\{5, 6\}}y_{\{4, 5, 6, 7\}}\\
 +(z_{\{5, 7\}}z_{\{1, 2, 3, 5, 6, 7\}}-z_{\{1, 2, 5, 7\}}z_{\{3, 5, 6, 7\}})y_{\{5, 7\}}y_{\{4, 5, 6, 7\}}\\
 +(z_{\{6, 7\}}z_{\{1, 2, 3, 5, 6, 7\}}-z_{\{1, 2, 6, 7\}}z_{\{3, 5, 6, 7\}})y_{\{6, 7\}}y_{\{4, 5, 6, 7\}}.
\end{multline}
Every single coefficient of a monomial in the $y_I$'s then gives a
generator for $I'$. It should be clear how to proceed in principle with
the other $f_{I,J}$'s. So Algorithm~\ref{alg:Procedure} calls itself twice,
once for $T_1$ and once for $T_2$.
\end{ex}

%\bibliographystyle{plain}
%\bibliography{diffeq}

\end{document}